\newtheorem{remark}[theorem]{Remark}
\begin{document}

\bibliographystyle{plain}
\title{
The behaviour of the complete eigenstructure of a polynomial matrix under a generic rational transformation.%\thanks{Received by the editors on Month x, 200x.
%Accepted for publication on Month y, 200y   Handling Editor: .}}
}

\author{
Vanni\ Noferini\thanks{Department of Mathematics,
University of Pisa, Largo Bruno Pontecorvo 5, 56127 Pisa, Italy.
(noferini@mail.dm.unipi.it).}}

\pagestyle{myheadings}
\markboth{V.\ Noferini}{Behaviour of polynomial matrix eigenstructures under generic rational transformations}
\maketitle

\begin{abstract}
Given a polynomial matrix $P(x)$ of grade $g$ and a rational function $x(y)=n(y)/d(y)$, where $n(y)$ and $d(y)$ are coprime nonzero scalar polynomials, the polynomial matrix $Q(y):=[d(y)]^g P(x(y))$ is defined. The complete eigenstructures of $P(x)$ and $Q(y)$ are related, including characteristic values, elementary divisors and minimal indices. A Theorem on the matter, valid in the most general hypotheses, is stated and proved.

\begin{AMS}
15A18, 15A21, 15A54.
\end{AMS}

\end{abstract}

\begin{keywords}
Polynomial matrix, rational transformation, complete eigenstructure, elementary divisors, matrix polynomial, minimal indices
\end{keywords}

\section{Introduction}
A polynomial matrix is a matrix whose entries belong to some polynomial ring $R$ \cite{tom}. In this paper we will always assume that 
$R$ is a principal ideal domain. This condition is equivalent to $R=\mathbb{F}[x]$, the ring of univariate polynomials in $x$ with coefficients lying in some field $\mathbb{F}$.

An important property of a polynomial matrix with entries in $\mathbb{F}[x]$ is its complete eigenstructure, whose definition is given in Subsection \ref{completeigenstructure}. The name \emph{eigenstructure} comes from the special case where $\mathbb{F}=\mathbb{C}$; in this context, polynomial matrices are usually seen instead as matrix polynomials, that is polynomials whose coefficients are matrices \cite{bible}. Any matrix polynomial is associated with a polynomial eigenvalue problem (PEP); the complete eigenstructure is strictly related with the properties of the associated PEP. More precisely, it gives the complete information about the eigenvalues, eigenvectors and Jordan chains of the matrix polynomial, and also about the Kronecker form of any strong linearization of the matrix polynomial \cite{dopico}. Polynomial eigenvalue problems arise in many applications, from mathematics, science and engineering; both their algebraic properties and the numerical methods for their approximate solutions are widely studied. See, e.g., \cite{golub, tisseur, mehrmannvoss}.

The aim of this paper is to investigate the link between the complete eigenstructures of two polynomial matrices $P(x)$ and $Q(y)$ 
related one to another by a rational transformation $x(y)$ of the variable. In order to better explain the question we are interested 
in, let us consider the following example, where $R=\mathbb{C}[x]$. Suppose that we have to deal with the polynomial matrix \[P(x)=
\left[\begin{array}{ccc} x^2-20x & 0 & 0\\x-20 &
x^2-20x & 0\\0 & 0 & x\\0 & 0 & x^2\\0 & 0 & 0\end{array}\right];
\]
if we choose grade($P(x)$)$=2$ (the grade of $P(x)$ is an arbitrary integer $g$ such that $g \geq \deg P(x)$; more details are given in Section \ref{definiz}), then the complete eigenstructure of $P(x)$ is the following:
\begin{itemize}
\item the elementary divisors of $P(x)$ are $(x-20)$, $x$, $(x-20)$, $x^2$;
\item there are no right minimal indices;
\item the left minimal indices of $P(x)$ are $0, \ 1$.
\end{itemize}
The rational change of variable $x(y)=\frac{16 y^2 - 25}{y^2-y}$ induces an application $\Phi_2$, as defined in \eqref{rationalfunction}, such that $\Phi_2(P(x))=(y^2-y)^2 P(\frac{16 y^2 - 25}{y^2-y})=:Q(y)$, with grade($Q(y))=4$ (see Section \ref{rationalfun}) and
\[Q(y)=
\left[\begin{array}{cccc} (25-16y^2)(2y-5)^2 & 0 & 0\\(y-y^2)(2y-5)^2 &
(25-16y^2)(2y-5)^2 & 0\\0 & 0 & (y^2-y)(16y^2-25)\\0 & 0 & (16y^2-25)^2\\0 & 0 & 0\end{array}\right].
\]
By studying the complete eigenstructure of $Q(y)$ we find out that
\begin{itemize}
\item the elementary divisors of $Q(y)$ are $(y-\frac{5}{2})^2$, $(y-\frac{5}{4})$, $(y+\frac{5}{4})$, $(y-\frac{5}{2})^2$, $(y-\frac{5}{4})^2$, $(y+\frac{5}{4})^2$;
\item there are no right minimal indices;
\item the left minimal indices of $Q(y)$ are $0, \ 2$.
\end{itemize}
Notice that $x(\frac{5}{2})=20$, $x(\pm \frac{5}{4})=0$, and that $y=\frac{5}{2}$ is a root of multiplicity $2$ of the equation 
$x(y)=20$ while $y=\pm \frac{5}{4}$ are roots of multiplicity $1$ of the equation $x(y)=0$. We can therefore conjecture that if 
$(x-x_0)^{\ell}$ is an elementary divisor of $P(x)$ and $y_0$ is a root of multiplicity $m$ of the equation 
$x(y)=x_0$ then $(y-y_0)^{m \cdot \ell}$ is an elementary divisor of $Q(y)$. Moreover, we see that apparently the minimal indices have 
been multiplied by a factor $2$; notice that $2$ is the degree of the considered rational transformation (that is the maximum of the 
degrees of the numerator and the denominator). 

The main result of the present paper is the proof that the conjectures above, which will be stated more precisely in Section \ref{themainresult}, are true for every rational transformation of the variable $x(y)$ and every polynomial matrix $P(x)$. Moreover, analogous properties hold for infinite elementary divisors and right minimal indices.

The motivation for this work comes from the will to generalise the partial results derived in \cite{dicksonpal}, where we considered the particular case of a square and regular polynomial matrix with entries in $\mathbb{C}[x]$ and without infinite elementary divisors, and the Dickson change of variable $x(y)=\frac{y^2+1}{y}$. Moreover, we wish to extend the results by D. S. Mackey and N. Mackey \cite{iciam}, who described the special case of rational transformations of degree $1$, also known as M\"{o}bius transformations. The present contribution is offered as both a synthesis and an extension of the previous works cited above.
 
The results provided in this paper can be used to design numerical methods for the approximate solution of PEPs. An example in this regard, restricted to the case of the Dickson transformation, is given in \cite{dicksonpal} for the solution of the palindromic PEP. 
 
The structure of this paper is the following: in Section \ref{definiz} we expose the theoretical background we are going to work within, 
and we give some basic definitions that we will use later on. In Section \ref{rationalfun} we formally define the application between 
polynomial matrices induced by a rational change of variable and we present some intermediate results. Our main result is Theorem 
\ref{main}, which is stated and commented in Section \ref{themainresult}; Sections \ref{proof1} and \ref{proof2} are devoted to the 
proof of our result. 
 For the sake of simplicity, in Sections \ref{proof1} and \ref{proof2} we assume that the underlying field is 
algebraically closed: in Section \ref{comments} we show how the result still holds for an arbitrary field. 
Finally, in Section 
\ref{app}, root polynomials are introduced in order to prove a technical Lemma.

The first part of Theorem 
\ref{main} was stated and proved, but only for a very special case, in \cite{dicksonpal}. Besides the generalisation to a generic 
rational transformation and a generic polynomial matrix, this paper also contains the analysis of what happens to minimal indices 
and infinite elementary divisors.

\section{Preliminary definitions}\label{definiz}
In this Section we describe our notation and recall some basic definitions. 

\subsection{Basic facts on polynomials}\label{gradi}

Let $Z$ be a ring and let $Z[x]$ be the ring of the univariate polynomials in the variable $x$ with coefficients in $Z$.
We denote the degree of $z \in Z[x]$ by the letter $k$, and sometimes 
write $k=\deg z$.

On the other hand, the \emph{grade} \cite{m4} of a polynomial $z \in Z[x]$ is any integer $g=\mathrm{grade}(z)$ satisfying $g \geq k$. 
The choice of the grade of a polynomial is arbitrary: nevertheless, some algebraic properties of polynomial matrices depend on the grade.

\begin{remark}
In some sense, the degree of a polynomial is an intrinsic property while the grade depends on its representation. 
In fact, informally speaking, the grade depends on how many zero coefficients one wishes to add in front of the polynomial.
\end{remark}

Let now $g$ be the grade of $z=\sum_{i=0}^g a_i x^i \in Z[x]$. The \emph{reversal} of $z$ with respect to its grade \cite{gkl, m4} is
\begin{equation}\label{rev}
\mathrm{Rev}_g z := \sum_{i=0}^g a_{g-i} x^i.
\end{equation}
The subscript $g$ will sometimes be omitted when the reversal is taken with respect to the degree of the polynomial, that is $\mathrm{Rev}_k z=:\mathrm{Rev} z$. 

Let now $\mathbb{F}$ be an arbitrary algebraically closed field. 

\begin{remark}
Although the hypothesis that $\mathbb{F}$ is algebraically closed is useful to state in a simpler way our results, it is not strictly necessary. See Section \ref{comments}.
\end{remark}

A well-known result that is crucial to us is that $\mathbb{F}[x]$ is guaranteed to be an Euclidean domain. 
Given $z_1, z_2 \in \mathbb{F}[x]$, not both zero, we denote by $\mathrm{GCD}(z_1,z_2)$ their greatest common divisor; we additionally require that $\mathrm{GCD}(z_1,z_2)$ is always monic so that it is uniquely defined. 
We say that $z_1$ and $z_2$ are \emph{coprime} if $\mathrm{GCD}(z_1,z_2)=1_{\mathbb{F}[x]}$.

Notice that a polynomial $z \in \mathbb{F}[x]$ can be thought of as a function 
$z(x):\mathbb{F}\rightarrow\mathbb{F}$. Thus, applying \eqref{rev}, in this case the formula $\mathrm{Rev}_gz(x)=x^g z(x^{-1})$ holds.

Let now $Z^{m \times p}$ be the set of $m \times p$ 
matrices with entries in $Z$; the case $p=1$ corresponds to the set of vector with 
$m$ elements in $Z$, denoted by $Z^m$. We are mainly interested in analysing $(\mathbb{F}[x])^{m \times p}$, 
the set of $m \times p$ \emph{polynomial matrices} 
with entries in $\mathbb{F}[x]$. 
$M_m(\mathbb{F}[x]):=(\mathbb{F}[x])^{m \times m}$ is the \emph{ring of square polynomial matrices of dimension $m$}. 
A square polynomial matrix $A \in M_m(\mathbb{F}[x])$ is said to be \emph{regular} if $\det A \neq 0_{\mathbb{F}[x]}$ and \emph{singular} otherwise. If $A$ is regular and $\det A \in \mathbb{F}$ then $A$ is called \emph{unimodular}.

\begin{remark}
Notice that $(\mathbb{F}[x])^{m \times p}=(\mathbb{F}^{m \times p})[x]$; or in other words, a polynomial matrix, defined as a matrix whose entries are polynomials, is also a matrix polynomial, defined as a polynomial whose coefficients are matrices. 
\end{remark}

The notions of grade and degree can be extended in a straightforward way to polynomial matrices, as follows: the grade (resp., the degree) of $A \in (Z[x])^{m \times p}$ is defined as $\max_{i,j}$grade$(A_{ij})$ (resp., as $\max_{i,j} \deg A_{ij}$). Analogously, the reversal of a polynomial matrix is defined just as in \eqref{rev}, after replacing $a_i \in Z$ with $B_i \in Z^{m \times p}$.

\subsection{Characteristic values, elementary divisors, and minimal indices}\label{completeigenstructure}
Let $A \in (F[x])^{m \times p}$, and let $\nu=:\min(m,p)$. Suppose that there exist $D_1, \dots, D_{\nu} \in \mathbb{F}[x]$ such that 
$A_{ij}=D_i \delta_{ij}$, where $\delta_{ij}$ is the Kronecker's delta. 
Then we write $A=\mathrm{diag}(D_{1},\ \dots, \ D_{\nu})$, and we say that $A$ is \emph{diagonal}. Notice that we use the notation 
indifferently for both square and rectangular polynomial matrices. 

The following Theorem, which in its most general is due to Frobenius \cite{frobenius}, is in point of fact valid for any 
matrix with entries in a principal ideal domain \cite{tom, bible}. 
\begin{theorem}\label{smiththeorem}
Let $P(x) \in (\mathbb{F}[x])^{m \times p}$. 
Then there exist two unimodular $A(x) \in M_m(\mathbb{F}[x])$ and $B(x) \in M_p(\mathbb{F}[x])$ such that 
\[S(x)=A(x)P(x)B(x)=\mathrm{diag}(d_1(x),\dots,d_{\nu}(x)),\] where $d_i(x) \in \mathbb{F}[x]$ is monic 
 $\forall i \leq \nu:=\min(m,p)$ and 
$d_{i}(x)|d_{i+1}(x) \ \ \forall i \leq \nu-1$.
\end{theorem}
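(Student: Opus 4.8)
The plan is to prove the statement constructively, reducing $P(x)$ to diagonal form by a finite sequence of \emph{elementary operations} and arguing by induction on the matrix dimensions. The three elementary operations I will use are: interchanging two rows or two columns; multiplying a row or column by a nonzero element of $\mathbb{F}$; and adding a polynomial multiple of one row (resp.\ column) to another. Each such operation is realised by left-multiplication (for row operations) or right-multiplication (for column operations) by a unimodular matrix, and since the determinant is multiplicative a product of unimodular matrices is again unimodular. Hence it suffices to exhibit a sequence of elementary operations carrying $P(x)$ to the claimed diagonal form; the matrices $A(x)$ and $B(x)$ are then obtained by accumulating, respectively, the row factors and the column factors.

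For the inductive step I may assume $P(x) \neq 0$, since otherwise the conclusion is immediate. Let $\delta(P)$ be the least degree among the nonzero entries of $P(x)$. After a row and a column interchange I may assume $\deg P_{11} = \delta(P)$. I then clear the first row and the first column: for an entry $P_{1j}$, the Euclidean structure of $\mathbb{F}[x]$ lets me write $P_{1j} = q\,P_{11} + r$ with $r=0$ or $\deg r < \deg P_{11}$, and subtracting $q$ times the first column from the $j$-th column replaces $P_{1j}$ by $r$. If $r \neq 0$ then $\deg r < \delta(P)$, so swapping it into the pivot position strictly decreases the nonnegative integer $\delta$; if $r=0$ the entry has been annihilated. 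Treating the first column in the same way and repeating, each failure of divisibility lowers $\delta$, so after finitely many steps all off-pivot entries in the first row and column vanish.

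Next I enforce the divisibility condition. If, once the border is cleared, some trailing entry $P_{ij}$ (with $i,j \geq 2$) is not divisible by $P_{11}$, then adding the $i$-th row to the first row brings $P_{ij}$ into the first row without altering the pivot; dividing it by $P_{11}$ now leaves a nonzero remainder of degree below $\delta(P)$, and pivoting on that remainder again decreases $\delta$. Because $\delta$ cannot decrease forever, the process terminates with $P_{11}$ dividing every trailing entry and the border cleared. I set $d_1 := P_{11}$, rescaled to be monic, and apply the induction hypothesis to the $(m-1)\times(p-1)$ trailing block, embedding its unimodular transformations block-diagonally beneath a leading $1$. The divisibility $d_1 \mid d_2$ holds because $d_1$ divides every entry of that block, and therefore each of its invariant factors.

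The main obstacle I anticipate is the interaction between the two phases: merely clearing the first row and column does not guarantee that $P_{11}$ divides the trailing entries, and the corrective step that restores divisibility may reintroduce nonzeros into the border. The device that resolves this is the single monovariant $\delta(P)$: every step one is \emph{forced} to take strictly lowers $\delta$, which is a nonnegative integer, so the whole reduction must halt after finitely many steps. (Uniqueness of the $d_i$, which the statement does not assert, would instead follow from the invariance under unimodular equivalence of the $\mathrm{GCD}$ of all $k \times k$ minors.)
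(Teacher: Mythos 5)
Your proof is correct, but there is nothing in the paper to compare it against: the paper does not prove this theorem at all. It is stated as a classical result, attributed to Frobenius and cited from Gantmacher and Gohberg--Lancaster--Rodman, with the remark that it holds for matrices over any principal ideal domain. What you have written is the standard constructive argument for the Smith form over a Euclidean domain: pivot on an entry of minimal degree, clear the border by Euclidean division, use the degree monovariant $\delta(P)$ to force termination, repair failures of divisibility in the trailing block by the row-addition trick, and induct on the size. The steps are sound, including the two points that are most often botched: (i) operations on the trailing block produce $\mathbb{F}[x]$-linear combinations of its entries, so divisibility by $d_1$ is preserved throughout and $d_1 \mid d_2$ follows; and (ii) termination rests on the single monovariant $\delta$, not on any (false) claim that the clearing and divisibility phases cannot interfere with one another. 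What your approach buys is a self-contained, algorithmic proof; what it gives up is generality: your argument is tied to the Euclidean structure of $\mathbb{F}[x]$, whereas over a non-Euclidean PID elementary operations alone do not suffice and one must multiply by general unimodular matrices built from B\'ezout identities. Since the paper only ever applies the theorem to $\mathbb{F}[x]$, that loss is harmless here. One small wrinkle, a defect of the statement rather than of your proof: if $\mathrm{rank}\,P < \nu$ then some $d_i$ must be the zero polynomial, which is not monic; the intended reading (confirmed by the paper's later remark that a singular square $P(x)$ has a zero invariant polynomial) is ``monic or zero'', and your treatment of the zero matrix and of zero trailing blocks is consistent with that convention, though it would be worth saying so explicitly.
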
 

Such an $S(x) \in (\mathbb{F}[x])^{m \times p}$ is called the \emph{Smith form} \cite{bible, smith} of $P(x)$, 
and the $d_i(x)$ are called its \emph{invariant polynomials} \cite{tom, bible}. The Smith form, and thus the invariant 
polynomials, are uniquely determined by $P(x)$. Notice that a square polynomial matrix $P(x)$ is singular if and only if at least one 
of its invariant polynomials is zero.

Using the fact that $\mathbb{F}$ is algebraically closed, let us consider a factorization of the invariant polynomials over 
$\mathbb{F}[x]$: $d_i(x)=\prod_j (x-x_j)^{k_{j,(i)}}$.  
The factors $(x-x_j)^{k_{j,(i)}}$ are called the \emph{elementary divisors} of $P(x)$ \cite{tom, bible} corresponding to the 
\emph{characteristic value} $x_j$ \cite{tom}. Notice that, from Theorem \ref{smiththeorem},  
$i_1\leq i_2 \Rightarrow k_{j,(i_1)}\leq k_{j,(i_2)}$.

\begin{remark}
When $\mathbb{F}=\mathbb{C}$ the characteristic values of the polynomial matrix $P(x)$ are often 
called the eigenvalues of the matrix polynomial $P(x)$. Given an eigenvalue $x_0$, there is a Jordan chain of length $\ell$ at 
$x_0$ if and only if $(x-x_0)^{\ell}$ is an elementary divisor. The number of Jordan chains at $x_0$ is equal to the number of invariant 
polynomials that have $x_0$ as a root \cite{bible}.
\end{remark}

Let us now denote by $\mathbb{F}(x)$ the \emph{field of fractions} of the ring $\mathbb{F}[x]$.   
Let $\mathcal{V}$ be a vector subspace of $(\mathbb{F}(x))^p$, with $\dim \mathcal{V}=s$. Let $\{v_i\}$ be a polynomial basis 
for $\mathcal{V}$ with the property $\deg v_1 \leq \dots \leq \deg v_s$. Often we will arrange a polynomial basis in the matrix form 
$V(x)=[v_1(x),\dots,v_s(x)] \in (\mathbb{F}[x])^{p \times s}$. Clearly, polynomial bases always exist, because one may start from any 
basis with elements in the (vectorial) field of fractions, and then build a polynomial basis just by multiplying by the least common denominator. Let $\alpha_i:=\deg v_i$ be the degrees of the vectors of such a polynomial basis; the \emph{order} of $V(x)$ is defined \cite{forneyjr} as $\sum_{i=1}^s \alpha_i$. A polynomial basis is called \emph{minimal} \cite{forneyjr} if its order is minimal amongst all the polynomial bases for $\mathcal{V}$, and the $\alpha_i$ are called its \emph{minimal indices} \cite{forneyjr}. It is possible to prove \cite{forneyjr, tom} that, although there is not a unique minimal basis, the minimal indices are uniquely determined by $\mathcal{V}$.

The \emph{right minimal indices} \cite{dopico} of a polynomial matrix $P(x) \in (\mathbb{F}[x])^{m \times p}$ are defined as the minimal indices of $\ker P(x)$. Analogously, the \emph{left minimal indices} \cite{dopico} of $P(x)$ are the minimal indices of $\ker P(x)^T$.

Given the grade $g$ of $P(x)$, we say that $\infty$ is a characteristic value of $P(x)$ if $0_{\mathbb{F}}$ is a characteristic value of $Rev_g P(x)$. The elementary divisors corresponding to $\infty$ are defined \cite{diciotto} as the elementary divisors of $Rev_gP(x)$ corresponding to $0_{\mathbb{F}}$; if $x^{\ell}$ is an elementary divisor of $Rev_gP(x)$ we formally write that $(x-\infty)^{\ell}$ is an infinite elementary divisor of $P(x)$. Notice that the infinite elementary divisors of a polynomial matrix clearly depend on the arbitrary choice of its grade.

We complete this section with the following definition \cite{dopico}: the \emph{complete eigenstructure} of $P(x)$ is the set of both finite and infinite elementary divisors of $P(x)$ and of its left and right minimal indices.
\section{Rational transformations of polynomial matrices}\label{rationalfun}
Let $n(y), d(y) \in \mathbb{F}[y]$ be two nonzero, coprime polynomials. Let us define $N := \deg n(y)$, $D := \deg d(y)$, 
and $G:=\max(N,D)$. We will always suppose $G \geq 1$, that is $n(y)$ and $d(y)$ are not both elements of $\mathbb{F}$. We denote 
the coefficients of $n(y)$ and $d(y)$ as $n_i \in \mathbb{F}$, $i=0,\dots,N$ and $d_j \in \mathbb{F}$, $j=0,\dots,D$, that is 
$n(y)=\sum_{i=0}^N n_i y^i$, $d(y)=\sum_{i=0}^D d_i y^i$. 

Let us introduce the notation $\mathbb{F}^*:=\mathbb{F} \cup \{\infty\}$, having formally defined $\infty:=0_{\mathbb{F}}^{-1}$. 
We consider the generic rational function from $\mathbb{F}^*$ to $\mathbb{F}^*$:
\begin{equation}\label{rationalfunction}
x(y)=\frac{n(y)}{d(y)}.
\end{equation}
The function \eqref{rationalfunction} induces an application $\Phi_{g,n(y),d(y)}:(\mathbb{F}[x])^{m \times p} \rightarrow (\mathbb{F}[y])^{m \times p}$ defined as
\begin{equation}\label{transformation}
\Phi_{g,n(y),d(y)}(P(x)) = Q(y):=[d(y)]^g P(x(y))
\end{equation}
Here $g$ is the grade of $P(x) \in (\mathbb{F}[x])^{m \times p}$, so for any choice of $g$ a different application is defined. We will usually omit the functional dependence of $\Phi$ on $n(y)$ and $d(y)$ unless the context allows any possible ambiguity; also, if the grade is chosen to be $g=k$ we will sometimes omit the subscript $g$, that is $\Phi(P(x)):=\Phi_{k,n(y),d(y)}(P(x))$.

Since a polynomial matrix is also a matrix polynomial, we can write $P(x)=\sum_{i=0}^g P_i x^i$ for some $P_i \in \mathbb{F}^{m \times p}$, $i=0,\dots,g$. Notice that following the same point of view we can also write $Q(y)=\sum_{i=0}^{g} P_i [n(y)]^i [d(y)]^{g-i}$.

\begin{lemma}\label{degQ}
$\deg Q(y)=\deg \Phi_g(P(x))$ is less than or equal to $q:=$  
$gD+\max_{i : P_i \neq 0}(iN-iD)$. If $N \neq D$ the strict equality $\deg Q(y)=q$  always holds. Moreover, $q \leq gG$.
\end{lemma}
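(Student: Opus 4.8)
The plan is to work directly from the expansion $Q(y)=\sum_{i=0}^{g} P_i\,[n(y)]^i\,[d(y)]^{g-i}$ given just before the statement, treating $Q(y)$ as a matrix polynomial and estimating the degree entrywise. For each index $i$ with $P_i \neq 0_{\mathbb{F}^{m\times p}}$, the term $P_i\,[n(y)]^i\,[d(y)]^{g-i}$ has degree exactly $iN+(g-i)D$, since $n(y)$ has degree $N$ and $d(y)$ has degree $D$ and a nonzero matrix coefficient does not lower the degree of the scalar factor. Rewriting $iN+(g-i)D=gD+i(N-D)$, the degree of the whole sum is at most $\max_{i:P_i\neq 0}\bigl(gD+i(N-D)\bigr)=gD+\max_{i:P_i\neq 0}(iN-iD)=q$, which establishes the inequality $\deg Q(y)\le q$. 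This is the routine half.

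The more delicate half is the claim that $\deg Q(y)=q$ whenever $N\neq D$. First I would observe that when $N\neq D$ the quantity $i(N-D)$ is a strictly monotone function of $i$ (increasing if $N>D$, decreasing if $N<D$), so the maximum over $\{i:P_i\neq 0\}$ is attained at a \emph{unique} index $i^*$ (the largest such $i$ if $N>D$, the smallest if $N<D$). Consequently the leading term, of degree $q$, comes from the single summand $P_{i^*}\,[n(y)]^{i^*}\,[d(y)]^{g-i^*}$, and there is no other summand of degree $q$ that could cancel it. The coefficient of $y^q$ in this summand is $P_{i^*}\,n_N^{\,i^*}\,d_D^{\,g-i^*}$, where $n_N\neq 0_{\mathbb{F}}$ and $d_D\neq 0_{\mathbb{F}}$ are the leading coefficients of $n(y)$ and $d(y)$; since $P_{i^*}\neq 0$, this matrix coefficient is nonzero, so the degree is exactly $q$. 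The point where I must be careful is precisely this non-cancellation argument: the equality can fail when $N=D$, because then every $i$ gives the same exponent $i(N-D)=0$, all summands contribute to degree $gD=gN$ simultaneously, and the leading matrix coefficients $\sum_i P_i\,n_N^{\,i}\,d_D^{\,g-i}$ may cancel to zero. Thus the strict monotonicity furnished by $N\neq D$ is exactly what guarantees a lone top-degree term and hence the equality.

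Finally, for the bound $q\le gG$, I would argue directly from $G=\max(N,D)$. Writing $q=gD+\max_{i:P_i\neq 0}i(N-D)$ and noting $i\le g$ for all indices, if $N\ge D$ then $i(N-D)\le g(N-D)$, giving $q\le gD+g(N-D)=gN\le gG$; if instead $N<D$ then $N-D<0$ and the maximum over $i\ge 0$ of $i(N-D)$ is at most $0$, so $q\le gD\le gG$. In either case $q\le gG$, completing the proof. The only genuine obstacle is the non-cancellation step in the middle paragraph; everything else is a direct degree count.
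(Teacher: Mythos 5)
Your proof is correct and takes essentially the same approach as the paper: the same decomposition $Q(y)=\sum_i P_i\,[n(y)]^i\,[d(y)]^{g-i}$ with the degree count $\deg Q_i(y)=gD+i(N-D)$, and the same non-cancellation argument for $N\neq D$ (the paper phrases it as all nonzero summands having pairwise distinct degrees, you as a unique maximizing index $i^*$ with a nonzero leading coefficient --- equivalent reasoning). Your direct bound $q\le gG$ is a slight streamlining of the paper's case analysis on where the maximum is attained, but it is not a genuinely different route.
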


\begin{proof}
Writing $Q(y)$ as above, we can see it as a sum of the $k+1$ polynomial matrices $Q_i(y)= P_i [n(y)]^i [d(y)]^{g-i}$, $0\leq i \leq k$, with either $Q_i(y)=P_i=0$ or $\deg Q_i(y) = gD+i(N-D)$. Since the degree of the sum of two polynomials cannot exceed the greatest of the degrees of the considered polynomials, $\deg Q(y)$ cannot be greater than  $q$. Notice that if $N=G$ then $gG\geq q=kG+(g-k)D$ and the maximum is realised by $i=k$, while otherwise the maximum is realised by the smallest index $j$ such that $P_j \neq 0$, and $q=(g-j)G+jN$. This means that if $N<G$ and $P_0=0$ then $q<gG$, while $q=gG$ if $N<G$ but $P_0 \neq 0$.

Notice finally that, if $i_1 \neq i_2$, then $Q_{i_1}(y)$ and $Q_{i_2}(y)$ have the same degree if and only if $D=N$. Since 
$\deg Q_{i_1}(y) \neq \deg Q_{i_2}(y) \Rightarrow \deg (Q_{i_1}(y)+Q_{i_2}(y))=
\max(\deg Q_{i_1}(y),\deg Q_{i_2}(y))$, $D \neq N$ is a sufficient condition for $\deg Q(y) =q$.
\end{proof}

Lemma \ref{degQ} shows that 
$\deg Q(y) \leq q \leq gG$. The next Proposition describes the conditions under which the equality 
$\deg Q(y) = gG$ holds.

\begin{proposition}\label{degphipi}
Let $Q(y) =\Phi (P(x))$. It always holds $\deg Q(y) \leq gG$, and $\deg Q(y) < g G$ if and only if one of the following is true:
\begin{enumerate}
\item $N > D$ and $g>k$;
 \item $N \leq D$, and there exist a natural number $a \geq 1$ and a polynomial matrix   
$\hat{P}(x) \in (\mathbb{F}[x])^{m \times p}$ such that $P(x)=(x-\hat{x})^a \hat{P}(x)$, where 
$\hat{x} := n_G d_G^{-1}$ if $N=D=G$ and $\hat{x}:=0_{\mathbb{F}}$ if $N < D =G$.
\end{enumerate}
\end{proposition}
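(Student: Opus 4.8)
The plan is to combine Lemma~\ref{degQ} with a case split according to the sign of $N-D$. The point is that the Lemma already determines $\deg Q(y)$ exactly whenever $N \neq D$, so the only case requiring a genuinely separate treatment is $N=D=G$, where cancellation of leading terms may occur; in every other regime the degree is pinned down by $q$ and the conclusion follows from a short arithmetic comparison with $gG$.

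First I would handle $N>D$, so that $G=N$. By Lemma~\ref{degQ} we have $\deg Q(y)=q$, and since $N-D>0$ the maximum defining $q$ is attained at $i=k$, giving $q=kN+(g-k)D$. A short computation then gives $gG-\deg Q(y)=(g-k)(N-D)$, which is positive exactly when $g>k$. Thus, within the regime $N>D$, the inequality $\deg Q(y)<gG$ is equivalent to item~(1), and item~(2) cannot occur here because it requires $N\leq D$.

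Next I would treat $N<D=G$. Here again $N\neq D$, so $\deg Q(y)=q$; but now $N-D<0$, so the maximum is realised at the smallest index $j$ with $P_j\neq 0$, whence $q=gD-j(D-N)$. Therefore $\deg Q(y)<gG=gD$ holds if and only if $j\geq 1$, that is, if and only if $P_0=0$. Since $P_0=P(0_{\mathbb{F}})$ vanishes precisely when $x$ divides every entry of $P(x)$, this is equivalent to $P(x)=x^a\hat{P}(x)$ for some $a\geq 1$ and some polynomial matrix $\hat{P}(x)$; recalling that $\hat{x}=0_{\mathbb{F}}$ in this subcase, this is exactly item~(2).

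The main obstacle is the remaining case $N=D=G$, where Lemma~\ref{degQ} does not force $\deg Q(y)=q$, because every nonzero summand $Q_i(y)=P_i[n(y)]^i[d(y)]^{g-i}$ has the same degree $gG$ and their leading coefficients may cancel. I would therefore compute the coefficient of $y^{gG}$ in $Q(y)$ directly. Using that $n_G$ and $d_G$ are the leading coefficients of $n(y)$ and $d(y)$, this coefficient equals $\sum_{i=0}^{g} P_i\,n_G^{\,i}\,d_G^{\,g-i}=d_G^{\,g}\,P(\hat{x})$ with $\hat{x}=n_Gd_G^{-1}$. Since $d_G\neq 0_{\mathbb{F}}$, it vanishes if and only if $P(\hat{x})=0$, i.e.\ if and only if $(x-\hat{x})$ divides $P(x)$ entrywise, which is equivalent to $P(x)=(x-\hat{x})^a\hat{P}(x)$ for some $a\geq 1$. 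This is precisely item~(2) in this subcase, and assembling the three cases yields the stated equivalence.
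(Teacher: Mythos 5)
Your proof is correct and takes essentially the same route as the paper's: the same three-way case split on the sign of $N-D$, the same appeal to Lemma~\ref{degQ} to pin down $\deg Q(y)=q$ when $N\neq D$, and the same direct computation of the coefficient of $y^{gG}$ as $d_G^{\,g}P(n_G d_G^{-1})$ in the cancellation-prone case $N=D=G$. The only cosmetic difference is that you write out $gG-q=(g-k)(N-D)$ explicitly where the paper states the equivalence $\deg Q(y)=gG \Leftrightarrow g=k$ directly.
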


\begin{proof}
Lemma \ref{degQ} guarantees $\deg Q(y) \leq gG$. To complete the proof, there are three possible cases to be analysed.
\begin{itemize}
 \item If $G=N>D$, we know from Lemma \ref{degQ} that $\deg Q(y) = q$, and in this case $q=gD+kN-kD$. Therefore, 
$\deg Q(y)=gG \Leftrightarrow g=k$.
\item If $N=D=G$ and, we get $q=gG$. Let $Q(y)=\sum_{i=0}^{gG} \Theta_i y^i$: then, $\deg Q(y) < gG 
\Leftrightarrow \Theta_{gG} = 0_{(\mathbb{F}[x])^{m \times p}}$. On the other hand $\Theta_{gG}$ is the coefficient of $y^{gG}$ in 
$Q(y)=\sum_{i=0}^{g} P_i [n(y)]^i [d(y)]^{g-i}$, so $\Theta_{gG} = d_G^g \sum_{i=0}^g P_i n_G^i d_G^{-i} = d_G^g P(n_G d_G^{-1})$. 
Therefore, $\Theta_{gG}$ is zero if and only if every entry of 
$P(n_G d_G^{-1})$ is equal to $0_{\mathbb{F}[x]}$, or in other words if and only if 
$P(x) = (x-n_G d_G^{-1})^a \hat{P}(x)$ for some $a \geq 1$ and some suitable polynomial matrix $\hat{P}(x)$.
\item If $N<D=G$, recalling the proof of Lemma \ref{degQ} we conclude that $\deg Q(y) < gG$ if and only if $P_0 = 0$, which is 
equivalent to $P(x) = x^a \hat{P}(x)$ for a suitable value of $a \geq 1$ and some polynomial matrix $\hat{P}(x)$.
\end{itemize}
\end{proof}

The grade of $Q(y)$ is of course arbitrary, even though it must be greater than or equal to its degree. Since $\deg Q(y) \leq q \leq gG$, 
we shall define that the grade of $Q(y)$ is $gG$. This choice has an influence on the infinite elementary divisors of $Q(y)$, as they 
are equal to the elementary divisors corresponding to zero of the reversal of $Q(y)$ \emph{taken with respect to its grade}, that is 
$\mathrm{Rev}_{(gG)} Q(y)$. 

If one is interested in picking a different choice for the grade of $Q(y)$, the following Proposition explains how the infinite elementary divisors change.

\begin{proposition}
Let $P(x) \in (\mathbb{F}[x])^{m \times p}$, with $k=\deg P(x)$. Then the finite elementary divisors and the minimal indices of $P(x)$ do not depend on its grade, while the infinite elementary divisors do. Namely, let $\nu=\mathrm{min}(m,p)$; $x^{g-k} d_1(x),\dots,x^{g-k} d_{\nu}(x)$ are the invariant polynomials of $\mathrm{Rev}_g P(x)$ if and only if  $d_1(x),\dots,d_{\nu}(x)$ are the invariant polynomials of $\mathrm{Rev}_k P(x)$, for any choice of $g \geq k$.
\end{proposition}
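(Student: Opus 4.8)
The plan is to separate the three assertions. First I would observe that changing the grade of $P(x)$ does not alter $P(x)$ as an element of $(\mathbb{F}[x])^{m \times p}$: the grade only records how many vanishing leading coefficients we agree to carry, so the matrix itself, its Smith form, and hence its invariant polynomials and finite elementary divisors are all unchanged. Likewise the subspaces $\ker P(x) \subseteq (\mathbb{F}(x))^p$ and $\ker P(x)^T$ depend only on $P(x)$ viewed over the field of fractions, not on the chosen grade, so their minimal indices---the right and left minimal indices of $P(x)$---are grade-independent as well. This disposes of the first two claims.

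The substance of the statement is the behaviour of the infinite elementary divisors. Writing $P(x) = \sum_{i=0}^k P_i x^i$ with $P_k \neq 0$ and setting $P_i = 0$ for $k < i \leq g$, I would compute directly from \eqref{rev} that
\[ \mathrm{Rev}_g P(x) = \sum_{i=0}^g P_i x^{g-i} = x^{g-k}\sum_{i=0}^k P_i x^{k-i} = x^{g-k}\, \mathrm{Rev}_k P(x). \]
Thus passing from grade $k$ to grade $g$ simply multiplies the reversal by the scalar polynomial $x^{g-k}$.

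Next I would thread this scalar factor through the Smith form. Let $S(x) = A(x)\,\mathrm{Rev}_k P(x)\, B(x) = \mathrm{diag}(d_1(x),\dots,d_\nu(x))$ be the Smith form granted by Theorem \ref{smiththeorem}, with $A(x), B(x)$ unimodular. Multiplying both sides by $x^{g-k}$ and using the previous identity gives
\[ A(x)\,\big(\mathrm{Rev}_g P(x)\big)\, B(x) = x^{g-k} S(x) = \mathrm{diag}\big(x^{g-k} d_1(x),\dots,x^{g-k} d_\nu(x)\big). \]
It remains only to verify that the right-hand side is itself a valid Smith form: each nonzero $x^{g-k} d_i(x)$ is monic because $d_i(x)$ is, and the divisibility chain $x^{g-k} d_i \mid x^{g-k} d_{i+1}$ holds exactly because $d_i \mid d_{i+1}$. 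Since this diagonal matrix is obtained from $\mathrm{Rev}_g P(x)$ by unimodular multiplications and satisfies the defining conditions, the uniqueness of the Smith form (Theorem \ref{smiththeorem}) identifies $x^{g-k} d_1(x),\dots,x^{g-k} d_\nu(x)$ as precisely the invariant polynomials of $\mathrm{Rev}_g P(x)$. Because the map $d_i \mapsto x^{g-k} d_i$ is a bijection between the two lists, both directions of the stated equivalence follow at once.

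I do not anticipate a serious obstacle: the argument reduces to the observation that the reversal merely acquires a factor $x^{g-k}$, together with the fact that a scalar factor passes through the Smith form unchanged. The only point requiring care is the bookkeeping around the convention that a singular or non-full-rank polynomial matrix has some vanishing trailing invariant polynomials, so that ``monic'' must be understood as applying to the nonzero entries; once $x^{g-k}\cdot 0 = 0$ is noted to divide everything, the invariance of the divisibility chain and the appeal to uniqueness are routine.
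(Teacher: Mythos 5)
Your proof is correct and follows essentially the same route as the paper: both establish the identity $\mathrm{Rev}_g P(x) = x^{g-k}\,\mathrm{Rev}_k P(x)$, push the factor $x^{g-k}$ through the unimodular equivalence defining the Smith form, and conclude via the divisibility chain and uniqueness of the Smith form. Your additional care about the zero invariant polynomials and the explicit bijection argument for the ``if and only if'' are sound refinements of the same argument, not a different approach.
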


\begin{proof}
Neither Theorem \ref{smiththeorem} nor the properties of $\ker P(x)$ and $\ker P^T(x)$ depend on the grade, so minimal indices and finite elementary divisors cannot be affected by different choices. Let $S(x)=A(x) \mathrm{Rev}_k P(x) B(x)$ be the Smith form of $\mathrm{Rev}_k P(x)$. We have $\mathrm{Rev}_g P(x)=x^{g-k} \mathrm{Rev}_k P(x)$, which implies that $x^{g-k}S(x)=A(x) \mathrm{Rev}_g P(x) B(x)$. Clearly $d_i(x)|d_j(x) \Leftrightarrow x^{g-k}d_i(x)|x^{g-k}d_j(x)$, and therefore we conclude that $x^{g-k}S(x)$ is the Smith form of $\mathrm{Rev}_g P(x)$.
\end{proof}

Let $\alpha,\beta,\gamma,\delta \in \mathbb{F}$. If $G=1$, $\Phi_{g,\alpha y+\beta,\gamma y+\delta}$ is clearly invertible and its inverse, with a little abuse of notation, is $\Phi_{g,\beta-\delta x,\gamma x-\alpha}(Q(y)) = [\gamma x - \alpha]^g Q(\frac{\beta-\delta x}{\gamma x - \alpha})$
. The most general case is analysed below.

\begin{proposition}\label{nobij}
Let us denote by $\mathbb{F}[x]_g$ the set of the univariate polynomials in $x$ whose degree is less than or equal to $g$.  Given $g,n(y),d(y)$, the application $\Phi_{g,n(y),d(y)}:(\mathbb{F}[x]_g)^{m \times p}\rightarrow (\mathbb{F}[y]_{(gG)})^{m \times p}$ is always an injective function, but it is not surjective unless $G = 1$.
\end{proposition}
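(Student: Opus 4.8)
The plan is to regard $\Phi_{g,n(y),d(y)}$ as an $\mathbb{F}$-linear map between two finite-dimensional $\mathbb{F}$-vector spaces and to reduce the whole statement to a comparison of dimensions, after first establishing injectivity. I would begin by noting that $\Phi$ acts entrywise: writing $P(x)=\sum_{i=0}^g P_i x^i$, the image $Q(y)=\sum_{i=0}^g P_i[n(y)]^i[d(y)]^{g-i}$ is obtained by applying the scalar map $\phi:\mathbb{F}[x]_g\to\mathbb{F}[y]_{(gG)}$, $\phi(p)=[d(y)]^g\,p(x(y))$, to each of the $mp$ entries of $P(x)$ independently. Hence $\Phi$ is injective (resp. surjective) if and only if $\phi$ is, and it suffices to treat the scalar case $m=p=1$. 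By Lemma \ref{degQ} the map $\phi$ does land in $\mathbb{F}[y]_{(gG)}$, and it is evidently $\mathbb{F}$-linear in the coefficients $P_i$.

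Next I would prove injectivity of $\phi$, which is the crux of the argument. Suppose $\phi(p)=0$; since $\mathbb{F}[y]$ is an integral domain and $d(y)\neq 0$, the identity $[d(y)]^g\,p(x(y))=0$ forces $p(x(y))=0$ in the field $\mathbb{F}(y)$. Because $G\geq 1$ the rational function $x(y)=n(y)/d(y)$ is non-constant, hence transcendental over $\mathbb{F}$, and therefore $p(x(y))=0$ already implies $p=0$. If one prefers an argument internal to the algebraically closed setting, one may instead factor $p(x)=c\prod_j(x-r_j)$ and observe that $[d(y)]^{\deg p}\,p(x(y))=c\prod_j\bigl(n(y)-r_j d(y)\bigr)$; a factor $n(y)-r_j d(y)$ can vanish only if $n(y)=r_j d(y)$, which is impossible, for $r_j=0$ contradicts $n\neq 0$ while $r_j\neq 0$ contradicts the coprimality $\mathrm{GCD}(n,d)=1$ together with $G\geq 1$. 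Either way $\ker\phi=\{0\}$, so $\phi$, and hence $\Phi$, is injective for every admissible $g,n,d$.

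Finally I would settle surjectivity by counting dimensions: $\dim_{\mathbb{F}}\mathbb{F}[x]_g=g+1$ while $\dim_{\mathbb{F}}\mathbb{F}[y]_{(gG)}=gG+1$. When $G=1$ these coincide, so the injective linear map $\phi$ between equidimensional finite-dimensional spaces is automatically bijective, whence $\phi$ and $\Phi$ are surjective (recovering the explicit M\"obius inverse discussed above). When $G\geq 2$ (and $g\geq 1$, the only case carrying content) the codomain is strictly larger, $gG+1>g+1$, so the image of the injective $\phi$ has dimension $g+1$ and cannot exhaust $\mathbb{F}[y]_{(gG)}$; thus $\phi$, and therefore $\Phi$, fails to be surjective. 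I expect the injectivity step to be the only genuine obstacle: it is there that the hypotheses $G\geq 1$ and $\mathrm{GCD}(n,d)=1$ are indispensable, ruling out any polynomial relation satisfied by $x(y)$ over $\mathbb{F}$, whereas the remaining surjectivity dichotomy is a routine dimension comparison resting on Lemma \ref{degQ}.
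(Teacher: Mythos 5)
Your proof is correct and follows essentially the same route as the paper: reduce to the scalar case via the entrywise action of $\Phi_g$, obtain injectivity from the faithfulness of substituting the non-constant rational function $x(y)$, and settle non-surjectivity for $G>1$ by comparing the dimension $g+1$ of the image against the dimension $gG+1$ of the codomain (the paper phrases this same count as $g+1$ unknowns subject to $gG+1$ linear constraints). Your write-up is in fact somewhat more careful than the paper's, which leaves the transcendence/coprimality justification of the implication $P_1(x(y))=P_2(x(y))\Rightarrow P_1=P_2$ implicit and does not flag the trivial $g=0$ case.
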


\begin{proof}
Notice that $\Phi_g$ can be thought as acting componentwise, sending $P(x)_{ij}$ to $Q(y)_{ij}=\Phi_g(P(x)_{ij})$. Thus, it will be sufficient to show that, in the scalar case $\Phi_g : \mathbb{F}[x]_g\rightarrow\mathbb{F}[y]_{(gG)}$, $\Phi_g$ is surjective if and only if $G=1$. This is true because any polynomial that does \emph{not} belong to the set $R_y:=\{a(y)\in \mathbb{F}[y]:a(y)=\sum_{i=0}^{g} a_i [d(y)]^{g-i} [n(y)]^i\}$ cannot belong to the image of $\Phi_g$, and $R_y = \mathbb{F}[y]_{(gG)}$ if and only if $G = 1$. 
In fact, if we require that a generic $r \in \mathbb{F}[y]_{(gG)}$ belongs to $R_y$, we find out that the $g+1$ coefficients $a_i$ must satisfy $gG+1$ linear constraints.

To prove injectivity: $\Phi_g(P_1(x))=\Phi_g(P_2(x)) \Rightarrow P_1(x(y))=P_2(x(y)) \Rightarrow P_1(x)=P_2(x)$.
\end{proof}

Proposition \ref{nobij} tells us that, unless $G=1$ (the M\"{o}bius case), not every $Q(y)$ is such that $Q(y)=\Phi(P(x))$ for some $P(x)$.

A couple of additional definitions will turn out to be useful in the following. Let $x_0 \in \mathbb{F}^*$: we define $T_{x_0}$ as the counterimage of $x_0$ under the rational function $x(y)$. 

Moreover let $\alpha,\beta \in \mathbb{F}$ be such that $\frac{\alpha}{\beta}=x_0$ and $\alpha$ and $\beta$ are not both zero. For instance, we can pick 
$(\alpha,\beta)=(x_0,1_{\mathbb{F}})$ if $x_0 \neq \infty$ and $(\alpha,\beta)=(1_{\mathbb{F}},0_{\mathbb{F}})$ otherwise. Consider the polynomial equation
\begin{equation}\label{fundamental}
\alpha d(y)=\beta n(y).
\end{equation}
Let $S$ be the degree of the polynomial $\alpha d(y)-\beta n(y)$. Equation \eqref{fundamental} cannot have more than $S$ finite roots. 
If $S<G$ then we formally say $\infty \in T_{x_0}$. 

\begin{remark}\label{discu}
Notice that there are three cases that lead to $S<G$:
\begin{enumerate}
 \item $N=D=G$ and $x_0=n_G d_G^{-1}$, so that \eqref{fundamental} becomes $d_G n(y) = n_G d(y)$: 
in this case, $S$ is the maximum value of $i$ such that $n_i \neq x_0 d_i$;
\item $N<D=G$ and $x_0 = 0_{\mathbb{F}}$, so that \eqref{fundamental} becomes $n(y) = 0_{\mathbb{F}}$ and $S=N$;
\item $D<N=G$ and $x_0 = \infty$, so that \eqref{fundamental} is $d(y) = 0_{\mathbb{F}}$ and $S=D$.
\end{enumerate}
\end{remark}

We now define the multiplicity $m_0$ of any finite $y_0 \in T_{x_0}$ as the multiplicity of $y_0$ as a solution of the polynomial equation \eqref{fundamental}. If $\infty \in T_{x_0}$, its multiplicity is defined to be equal to $G-S$. Therefore, the sum of the multiplicities of all the (both finite and infinite) elements of $T_{x_0}$ is always equal to $G$, while the sum of the multiplicities of all the finite elements of $T_{x_0}$ is $S$.

The finite elements of $T_{x_0}$ are characterised by the following Proposition.

\begin{proposition}\label{crucial}
Let $y_0 \in \mathbb{F}$ and $x_0 \in \mathbb{F}^*$. Then $y_0 \in T_{x_0}$ if and only if $y_0$ is a solution of \eqref{fundamental} for $\alpha,\beta : x_0=\frac{\alpha}{\beta}$. 
Moreover, $\alpha_1 d(y_0)=\beta_1 n(y_0)$ and $\alpha_2 d(y_0)=\beta_2 n(y_0)$ if and only if $\frac{\alpha_1}{\beta_1}=\frac{\alpha_2}{\beta_2}$. 
\end{proposition}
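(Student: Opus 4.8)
The plan is to reduce both assertions to one observation: since $n$ and $d$ are coprime they have no common root, so for every $y_0 \in \mathbb{F}$ the pair $(n(y_0), d(y_0))$ is never $(0_\mathbb{F}, 0_\mathbb{F})$. Consequently the value $x(y_0) = n(y_0)/d(y_0)$ is always a well-defined element of $\mathbb{F}^*$, equal to $\infty$ exactly when $d(y_0) = 0_\mathbb{F}$ (in which case $n(y_0) \neq 0_\mathbb{F}$), and the ``slope'' $(n(y_0) : d(y_0))$ can be read as a point of $\mathbb{P}^1(\mathbb{F})$, identified with $\mathbb{F}^*$ via $(\alpha:\beta)\mapsto \alpha/\beta$.

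For the first equivalence I would fix a representative $(\alpha,\beta)$ of $x_0$, with $\alpha,\beta$ not both zero, and prove the chain
\[ y_0 \in T_{x_0} \iff x(y_0) = x_0 \iff (n(y_0) : d(y_0)) = (\alpha : \beta) \iff \alpha d(y_0) = \beta n(y_0). \]
The last step is the determinant criterion for equality of two points of $\mathbb{P}^1$: since neither $(n(y_0), d(y_0))$ nor $(\alpha,\beta)$ is the zero vector, the two are proportional if and only if $\alpha d(y_0) - \beta n(y_0) = 0_\mathbb{F}$. If an elementary presentation is preferred, the same conclusion comes from a short case split: when $x_0 \in \mathbb{F}$ take $(\alpha,\beta)=(x_0,1_\mathbb{F})$ and separate $d(y_0)\neq 0_\mathbb{F}$ (clear the denominator) from $d(y_0)=0_\mathbb{F}$ (both sides fail, the left because $x(y_0)=\infty\neq x_0$, the right because $\alpha d(y_0)=0_\mathbb{F}\neq n(y_0)$); when $x_0=\infty$ take $(\alpha,\beta)=(1_\mathbb{F},0_\mathbb{F})$, so \eqref{fundamental} reads $d(y_0)=0_\mathbb{F}$, which is precisely the condition $x(y_0)=\infty$.

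For the ``moreover'' I would keep $y_0$ fixed and regard $\alpha d(y_0) = \beta n(y_0)$ as a single homogeneous linear equation in the unknowns $(\alpha,\beta)$. Because $(n(y_0),d(y_0)) \neq (0_\mathbb{F},0_\mathbb{F})$ this equation is nontrivial, so its solution set is a one-dimensional subspace of $\mathbb{F}^2$, i.e.\ a single point of $\mathbb{P}^1(\mathbb{F})$. Hence if both pairs satisfy their equations they represent that one point and thus $\frac{\alpha_1}{\beta_1} = \frac{\alpha_2}{\beta_2}$; conversely, once one pair is known to be a solution (the standing hypothesis $y_0 \in T_{x_0}$), any proportional pair $(\alpha_2,\beta_2)=\lambda(\alpha_1,\beta_1)$ satisfies $\alpha_2 d(y_0) = \lambda\alpha_1 d(y_0) = \lambda\beta_1 n(y_0) = \beta_2 n(y_0)$. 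This shows that the validity of \eqref{fundamental} at $y_0$ depends only on the ratio $\frac{\alpha}{\beta}$, which is exactly what legitimises the notation $T_{x_0}$.

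The main obstacle is not algebraic depth but bookkeeping at infinity: one must handle the cases $x_0=\infty$ and $d(y_0)=0_\mathbb{F}$ consistently with the convention $\infty = 0_\mathbb{F}^{-1}$, and it is precisely the coprimality of $n$ and $d$ that excludes the indeterminate value $0_\mathbb{F}/0_\mathbb{F}$ and keeps the linear equation above nondegenerate. The projective reformulation folds these edge cases into one uniform statement, which is why I would organise the write-up around it rather than around a multi-branch case analysis.
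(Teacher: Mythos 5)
Your proof is correct and is in substance the same as the paper's: the paper disposes of the first claim by appealing to the definition of $T_{x_0}$ and of the second by noting that $x(y)$ is a function, and your projective determinant criterion together with the one-dimensionality of the solution space of $\alpha d(y_0)=\beta n(y_0)$ is precisely the rigorous content behind those two appeals. The only difference is expository: you make explicit the role of coprimality (ruling out $0_{\mathbb{F}}/0_{\mathbb{F}}$) and the edge cases $d(y_0)=0_{\mathbb{F}}$ and $x_0=\infty$, which the paper's two-sentence proof leaves implicit.
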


\begin{proof}
The definition of $T_{x_0}$ implies the first part of the Proposition. The second part comes from the fact that $x(y)$ is a function.
\end{proof}

Proposition \ref{crucial}, albeit rather obvious, has the following important implication: 

\begin{corollary}\label{nocommonroot}
$x_0 \neq x_1 \Leftrightarrow T_{x_0} \cap T_{x_1} = \emptyset$.
Equivalently, $\alpha_1 \beta_2 \neq \alpha_2 \beta_1$ if and only if $[\beta_1 n(y) - \alpha_1 d(y)]$ and $[\beta_2 n(y) - \alpha_2 d(y)]$ $\in \mathbb{F}[y]$ are coprime. 

In particular, for any finite $x_0 \in \mathbb{F}$, $\Phi(x-x_0)$ and $d(y)$ are coprime.
\end{corollary}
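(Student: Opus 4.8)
The plan is to treat the three assertions of the Corollary in turn, all of them reducing to the basic principle encoded in Proposition~\ref{crucial}, namely that $x(y)$ is a genuine function on $\mathbb{F}^*$. First, for the equivalence $x_0 \neq x_1 \Leftrightarrow T_{x_0}\cap T_{x_1}=\emptyset$ I would argue both directions by contraposition. If some $y_0$ lies in $T_{x_0}\cap T_{x_1}$, then $x_0 = x(y_0)=x_1$, which settles $x_0\neq x_1 \Rightarrow T_{x_0}\cap T_{x_1}=\emptyset$; for finite $y_0$ this is precisely the ``moreover'' clause of Proposition~\ref{crucial}. Conversely, if $x_0=x_1$ then $T_{x_0}=T_{x_1}$, and this set is nonempty because the multiplicities of its elements sum to $G\geq 1$ (equivalently, the degree-$S$ polynomial of \eqref{fundamental} has a finite root when $S\geq 1$, and otherwise $\infty\in T_{x_0}$); hence the intersection is nonempty.

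Second, I would pass to the reformulation in terms of coprimality. The condition $x_0\neq x_1$ is, by the definition of equality of the formal ratios $\alpha/\beta$ in $\mathbb{F}^*$, the same as $\alpha_1\beta_2\neq\alpha_2\beta_1$ (cross-multiplication, which remains valid in the cases involving $\infty$). Since $\mathbb{F}$ is algebraically closed, two polynomials are coprime exactly when they share no root; and by the first part of Proposition~\ref{crucial} the common \emph{finite} roots of $\beta_1 n-\alpha_1 d$ and $\beta_2 n-\alpha_2 d$ are precisely the finite elements of $T_{x_0}\cap T_{x_1}$. Combining this correspondence with the first equivalence then yields the stated one.

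The step I expect to be the main obstacle is reconciling \emph{set-theoretic} disjointness of the $T_{x_0}$, which involves the point $\infty$, with \emph{polynomial} coprimality, which only records finite roots. I would dispose of it by invoking Remark~\ref{discu}: the element $\infty$ belongs to $T_{x_0}$ for at most one value of $x_0$, the one singled out by comparing $N$ and $D$, so $\infty$ can never be a shared preimage of two \emph{distinct} values; equivalently, $x(\infty)$ is itself a well-defined point of $\mathbb{F}^*$. This is exactly what guarantees that ``no common finite root'' and ``$T_{x_0}\cap T_{x_1}=\emptyset$'' carry the same information whenever $x_0\neq x_1$, so that the two equivalences really do match.

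Finally, for the closing assertion I would specialise, taking grade $1$, so that $\Phi(x-x_0)=d(y)\bigl(x(y)-x_0\bigr)=n(y)-x_0 d(y)$ is the polynomial attached to the finite value $x_0$, while $d(y)$ is (up to sign) the one attached to $\infty$; since $x_0\neq\infty$, coprimality follows from the general case. In fact I would prefer to record the one-line direct argument, which bypasses any degeneracy: a common root $y_0$ of $n-x_0 d$ and $d$ would force $d(y_0)=0$ and hence $n(y_0)=0$, contradicting the standing hypothesis that $n$ and $d$ are coprime.
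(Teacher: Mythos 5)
Your route is the paper's own: the paper gives no explicit proof of this Corollary, presenting it as an immediate consequence of Proposition \ref{crucial} (i.e., of the fact that $x(y)$ is a function on $\mathbb{F}^*$), and your first paragraph, together with the observation that $T_{x_0}$ is nonempty because its multiplicities sum to $G\geq 1$, establishes the first biconditional correctly. The closing assertion is also handled correctly, and your one-line direct argument (a common root of $n-x_0d$ and $d$ would be a common root of $n$ and $d$) is exactly the right thing to record there.

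The gap is in the ``Equivalently'' sentence, and it sits precisely where you predicted the obstacle would be --- but your reconciliation does not close it. The appeal to Remark \ref{discu} shows that $\infty$ cannot be a common preimage of two \emph{distinct} values of $x_0$; this handles the direction $\alpha_1\beta_2\neq\alpha_2\beta_1\Rightarrow$ coprime, and nothing more. The converse direction requires, contrapositively: $\alpha_1\beta_2=\alpha_2\beta_1\Rightarrow$ the two polynomials share a nontrivial factor. When the two values coincide, the pairs $(\alpha_1,\beta_1)$ and $(\alpha_2,\beta_2)$ are proportional, hence so are the two polynomials $\beta_i n-\alpha_i d$; proportional polynomials fail to be coprime exactly when they have positive degree, i.e.\ when $S\geq 1$. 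In the degenerate case $S=0$ --- where $T_{x_0}=T_{x_1}=\{\infty\}$, so the set-theoretic intersection is nonempty yet contains no finite point --- the two polynomials are proportional nonzero \emph{constants}, hence coprime in $\mathbb{F}[y]$, while $\alpha_1\beta_2=\alpha_2\beta_1$. Concretely: $n(y)=y+1$, $d(y)=y$, $(\alpha_1,\beta_1)=(1,1)$, $(\alpha_2,\beta_2)=(2,2)$ produce the constant polynomials $1$ and $2$. So your conclusion that ``the two equivalences really do match'' does not follow from what you argued; taken literally, the polynomial reformulation is \emph{not} equivalent to the set-theoretic one, because coprimality in $\mathbb{F}[y]$ is blind to the common ``root at infinity'' that the sets $T_{x_0}$ record. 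To repair both your proof and the statement, one must either read the two polynomials projectively at degree $G$, counting $\infty$ as a common zero of multiplicity $G-S$, or exclude the case $S=0$; with either convention, your correspondence between common finite roots and common finite preimages, combined with Remark \ref{discu}, does complete the argument. This defect is inherited from the paper's loose statement rather than introduced by you, but a proof claiming the two formulations coincide has to confront it.
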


In order to clarify the latter definitions, let us consider an example. 
Let $\mathbb{F}=\mathbb{C}$ and take $n(y)=y^4+y^3-y^2-y+1$, $d(y)=y^4$. 
$T_1$ is the set of the solutions of the equation $n(y)=d(y)$, so in this case $T_1=\{-1,1,\infty\}$. Moreover, the multiplicity of $-1$ and $1$ are, respectively, $1$ and $2$; 
since $S=3$ and $G=4$, the multiplicity of $\infty$ is by definition $G-S=1$. Within the same example, $T_{\infty}=\{0\}$; $0$ has multiplicity $4$ because it is a root of order $4$ of the equation $d(y)=0$.

\section{Main result}\label{themainresult}
We are now able to state our main Theorem.

\begin{theorem}\label{main}
Given $m,p \in \mathbb{N}_0$ and $n(y),d(y) \in \mathbb{F}[y]$, let $x_0 \in \mathbb{F}^*$ be a characteristic value of $P(x) \in (\mathbb{F}[x])^{m \times p}$, and let $(x-x_0)^{\ell_1}, \dots, (x-x_0)^{\ell_j}$ be the corresponding elementary divisors. Let $g$ be the grade of $P(x)$, define $G=\max(\deg n(y),\deg d(y))$ and let $gG$ be the grade of $Q(y)=\Phi_{g}(P(x)):=[d(y)]^g P(\frac{n(y)}{d(y)}) \in (\mathbb{F}[y])^{m \times p}$. Then for any $y_0 \in T_{x_0}$:
\begin{itemize}
\item $y_0$ is a characteristic value of $Q(y)$;
\item $(y-y_0)^{m_0 \ell_1}, \ \dots, \ (y-y_0)^{m_0 \ell_j}$ are elementary divisors corresponding to $y_0$ for $Q(y)$, where $m_0$ is the multiplicity of $y_0$.
\end{itemize}
Conversely, if $Q(y)=\Phi_g(P(x))$ for some $P(x)$, and if $y_0 \in \mathbb{F}^*$ is a characteristic value of $Q(y)$ with corresponding elementary divisors $(y-y_0)^{\kappa_1}, \ \dots, \ (y-y_0)^{\kappa_j}$:
\begin{itemize}
\item $x_0=\frac{n(y_0)}{d(y_0)}$ is a characteristic value of $P(x)$;
\item $m_0|\kappa_i$ $\forall i \leq j$, where $m_0$ is the multiplicity of $y_0$ as an element of $T_{x_0}$, and $(x-x_0)^{m_0^{-1} \kappa_1}, \ \dots, \ (x-x_0)^{m_0^{-1} \kappa_j}$ are elementary divisors corresponding to $x_0$ for $P(x)$.
\end{itemize}

In addition, the following properties hold: 
\begin{itemize}
\item the right minimal indices of $P(x)$ are $\beta_1, \ \dots, \ \beta_s$ if and only if the right minimal indices of $Q(y)$ are $G \beta_1, \ \dots, \ G \beta_s$;
\item the left minimal indices of $P(x)$ are $\gamma_1, \ \dots, \ \gamma_r$ if and only if the left minimal indices of $Q(y)$ are $G \gamma_1, \ \dots, \ G \gamma_r$.
\end{itemize}
\end{theorem}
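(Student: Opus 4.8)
The plan is to reduce the elementary-divisor statements to a purely local computation at a single point, reading them off the Smith form of Theorem \ref{smiththeorem}, and to treat minimal indices by transporting polynomial bases of the kernels. The unifying device is the substitution $\psi\colon\mathbb{F}[x]\to\mathbb{F}(y)$, $x\mapsto n(y)/d(y)$, which is a ring homomorphism (extending to a field embedding $\mathbb{F}(x)\hookrightarrow\mathbb{F}(y)$) and acts entrywise on matrices, so that $Q(y)=[d(y)]^g\,\psi(P)$. Applying $\psi$ to $S(x)=A(x)P(x)B(x)$ and multiplying by the scalar $[d(y)]^g$ gives the central identity $[d(y)]^g\psi(S)=\psi(A)\,Q(y)\,\psi(B)$.

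Fix a finite $y_0\in T_{x_0}$ with $x_0\in\mathbb{F}$. Coprimality of $n,d$ forces $d(y_0)\neq0$ (else $x_0=\infty$), so $d(y)$ is a unit in the discrete valuation ring $O:=\mathbb{F}[y]_{(y-y_0)}$. Since $A,B$ are unimodular, $\det\psi(A),\det\psi(B)\in\mathbb{F}^\ast$ and, by the adjugate formula, $\psi(A)\in GL_m(O)$ and $\psi(B)\in GL_p(O)$; hence $Q(y)$ and $[d(y)]^g\psi(S)$ are equivalent over the PID $O$ and share their invariant factors up to units. Writing $d_i(x)=\prod_j(x-x_j)^{k_{j,(i)}}$, I would compute $\psi(d_i)=[d(y)]^{-\sum_j k_{j,(i)}}\prod_j\bigl[n(y)-x_j d(y)\bigr]^{k_{j,(i)}}$; as $d$ is a unit in $O$ and, by Corollary \ref{nocommonroot}, $n(y)-x_j d(y)$ vanishes at $y_0$ only for $x_j=x_0$ and then to order exactly $m_0$, the $(y-y_0)$-adic valuation of $[d(y)]^g\psi(d_i)$ equals $m_0$ times the exponent of $(x-x_0)$ in $d_i$, the nonzero such exponents being precisely $\ell_1,\dots,\ell_j$. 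Divisibility $d_i\mid d_{i+1}$ keeps these exponents non-decreasing, so $[d(y)]^g\psi(S)$ is already a Smith form over $O$. Because the valuation of the $i$-th local invariant factor of $Q$ equals the exponent of $(y-y_0)$ in the $i$-th global invariant polynomial of $Q$, this one computation yields both the forward claim and, read backwards, the converse: a characteristic value $y_0$ of $Q$ forces $x_0=n(y_0)/d(y_0)$ to be one of $P$, every exponent equals $m_0\ell_i$ (so $m_0\mid\kappa_i$), and the associated elementary divisors of $P$ are $(x-x_0)^{\kappa_i/m_0}$.

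The cases $y_0=\infty$ and $x_0=\infty$ I would fold into the finite case by reversal. A direct manipulation gives $\mathrm{Rev}_{gG}Q(y)=[\mathrm{Rev}_G n(y)]^g\,(\mathrm{Rev}_gP)\!\bigl(\tfrac{\mathrm{Rev}_G d(y)}{\mathrm{Rev}_G n(y)}\bigr)$, so $\mathrm{Rev}_{gG}Q$ is itself the image of $\mathrm{Rev}_gP$ under the rational transformation with numerator $\mathrm{Rev}_G d$ and denominator $\mathrm{Rev}_G n$, still of degree $G$ by coprimality. Since infinite elementary divisors are by definition the finite elementary divisors at $0$ of these reversals, and the multiplicity bookkeeping of Remark \ref{discu} is tailored to this reversal, the infinite statements follow from the finite ones after a routine check that the multiplicities $m_0$ match up.

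For the minimal indices, the plan is to transport kernels: $P(x)v(x)=0$ implies $Q(y)\,[d(y)]^{\deg v}v(n/d)=0$, so $w(y):=[d(y)]^{\deg v}v(n/d)\in\ker Q$ is polynomial, and because $\psi$ is a field embedding it preserves rank, whence $\ker P$ and $\ker Q$ have the same dimension $s$ and the same number of minimal indices. Given a minimal basis $\{v_i\}$ of $\ker P$ with $\deg v_i=\beta_i$, I would first note that no column $v_i$ can have a nonconstant polynomial factor common to all its entries (else dividing it out lowers the order, contradicting minimality); in particular the degenerate case of Proposition \ref{degphipi} is excluded, so $\deg w_i=G\beta_i$ exactly and $\{w_i\}$ is a polynomial basis of $\ker Q$ of order $G\sum_i\beta_i$, the same construction on $\ker P^T$ handling the left indices. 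The main obstacle, and the only part not essentially forced, is proving that $\{w_i\}$ is \emph{minimal}, i.e. that the individual indices are $G\beta_i$ rather than merely summing correctly: full column rank of $W(y)$ at points with $d(y_0)\neq0$ follows from minimality of $\{v_i\}$ and at points with $d(y_0)=0$ from its column-reducedness, but column-reducedness of $W$ itself (the behaviour at $y=\infty$) depends on the sign of $N-D$ and is most cleanly obtained by applying the same argument to the reversed transformation of the previous paragraph. I would then assemble these via Forney's criterion (full column rank everywhere plus column-reducedness) or, equivalently, verify the dimensional identity $\dim\{u\in\ker Q:\deg u\le t\}=\sum_i\max(0,t-G\beta_i+1)$ by pulling degree-bounded kernel vectors of $Q$ back through $\psi$ to $P$. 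The elementary-divisor part, by contrast, is entirely controlled by the localization above; a more hands-on alternative there would be to track root polynomials at $x_0$ and $y_0$, which is presumably the content of the technical Lemma of Section \ref{app}.
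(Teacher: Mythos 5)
Your localization argument for the finite--finite case is correct, complete, and genuinely different from the paper's route. The paper needs Lemma \ref{lancaster}, which it proves by developing a theory of root polynomials (Section \ref{app}), and then reduces the diagonal matrix $\Phi(T(x))$ to Smith form by explicit $2\times 2$ Bezout manipulations; you replace both steps by passing to the discrete valuation ring $O=\mathbb{F}[y]_{(y-y_0)}$, where $d(y)$ and $\det\psi(A),\det\psi(B)$ are units, so that $Q(y)$ and the diagonal matrix $[d(y)]^g\psi(S)$ are $GL(O)$-equivalent and the local invariant factors can be read off valuations, using Corollary \ref{nocommonroot} exactly as the paper does. This is shorter, gives the forward and converse statements at finite points in one stroke, and dispenses with Section \ref{app} entirely, at the cost of invoking standard facts about DVRs and localization of Smith forms that the paper proves by hand. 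Your minimal-index argument is essentially the paper's (transport the basis, exact degree count via Proposition \ref{degphipi}, Forney's criterion); you check Forney's conditions by pointwise full column rank plus column-reducedness where the paper computes degrees and GCDs of the $s\times s$ minors via Lemma \ref{phiofgcd} --- these are equivalent formulations, and yours avoids that lemma.

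The gap is in the infinite cases. Your reversal identity (read both ways, as $[\mathrm{Rev}_G n]^g(\mathrm{Rev}_g P)\left(\frac{\mathrm{Rev}_G d}{\mathrm{Rev}_G n}\right)$ and as $[\mathrm{Rev}_G d]^g\, P\left(\frac{\mathrm{Rev}_G n}{\mathrm{Rev}_G d}\right)$) handles $y_0=\infty$: for finite $x_0$ one must use the second reading (e.g.\ when $N<D=G$ and $x_0=0_{\mathbb{F}}$, the first reading sends $\tilde y_0=0$ to $\tilde x_0=d_G/n_G=\infty$ and is useless), and for $x_0=\infty$, $y_0=\infty$ the first reading works. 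But neither reading covers $x_0=\infty$ with $y_0$ a \emph{finite} root of $d(y)$ --- a case the theorem includes, and which arises unavoidably in your converse whenever $d(y_0)=0$, since then $x_0=n(y_0)/d(y_0)=\infty$. Reversal in $y$ relates $Q$ near such a $y_0$ to $\mathrm{Rev}_{gG}Q$ near $1/y_0$, which for $y_0\neq 0$ requires an extra M\"{o}bius-type step and for $y_0=0$ (i.e.\ $d(0)=0$) is outright circular. Your own framework shows why something new is needed: at such $y_0$ the polynomial $d(y)$ is \emph{not} a unit in $O$, so the central identity $[d]^g\psi(S)=\psi(A)\,Q\,\psi(B)$ is of no use there. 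The clean fix is the swap identity $Q(y)=[n(y)]^g\,(\mathrm{Rev}_g P)\left(\frac{d(y)}{n(y)}\right)$ --- precisely the paper's Case 3 --- after which your localization argument applies verbatim at $y_0$, because $n(y_0)\neq 0_{\mathbb{F}}$ makes $n$ a local unit and the Smith form of $\mathrm{Rev}_g P$ carries the infinite elementary divisors of $P$. Until that identity is added, your reduction ``to the finite case by reversal'' does not close.
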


For any choice of the application $\Phi_g$, Theorem \ref{main} gives a thorough description of the complete eigenstructure of $\Phi_g (P(x))$ with respect to the complete eigenstructure of $P(x)$. Notice that if $x(y)$ is a M\"{o}bius transformation then $m_0 \equiv 1$ and $G=1$, so the complete eigenstructure is unchanged but for the shift from one set of characteristic values to another. This is not the case for more general rational transformations, where other changes do happen.

The structure of the proof of Theorem \ref{main} is the following. First we prove the first part of the Theorem 
(the statement on elementary divisors). This is done dividing the statement in three cases:
\begin{enumerate}
\item[1.] $x_0 \in \mathbb{F}$ and $y_0 \in \mathbb{F}$;
\item[2.] $x_0 \in \mathbb{F}$ and $y_0 = \infty$;
\item[3.] $x_0 = \infty$.
\end{enumerate}
We first prove that the statement is true for case 1, 
then show that this implies that it is true for case 2. The validity of cases 1 and 2 implies case 3.

Finally, we prove the second part of the Theorem (the statement on minimal indices) with a constructive proof: we
 build a minimal basis of $Q(y)$ given a minimal basis of $P(x)$, and vice versa.
\section{Proof of Theorem \ref{main}: elementary divisors}\label{proof1}
The proof relies on the following Lemma, whose statement generalises \cite[Proposition 11.1]{bible}. 
The proof of the Lemma and more details are given in Section \ref{app}.

\begin{lemma}\label{lancaster}
Let $P(x) \in (\mathbb{F}[x])^{m \times p}$ and let $Q(x)=A(x)P(x)B(x)$ where $A(x)  \in M_m(\mathbb{F}[x])$ and 
$B(x) \in M_p(\mathbb{F}[x])$ are both regular, and 
suppose that $x_0 \in \mathbb{F}$ is neither a root of $\det A(x) \in \mathbb{F}[x]$ nor a root of 
$\det B(x) \in \mathbb{F}[x]$. 
Then  
$P(x)$ and $Q(x)$ have the same elementary divisors associated with $x_0$.
\end{lemma}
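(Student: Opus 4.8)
The plan is to reduce the statement about elementary divisors at the single point $x_0$ to a statement about the ranks of the matrices obtained by evaluating suitable derivatives, or equivalently to a statement about local Smith forms at $x_0$. Since elementary divisors at $x_0$ are determined by the exponents of $(x-x_0)$ appearing in the invariant polynomials, and these in turn are governed by the sequence of determinantal divisors (the GCDs of all $i\times i$ minors) localized at $x_0$, the key observation is that multiplying $P(x)$ on the left and right by matrices that are \emph{invertible at $x_0$} cannot change these local quantities.

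First I would localize the problem. Let $\mathbb{F}[x]_{(x_0)}$ denote the localization of $\mathbb{F}[x]$ at the maximal ideal $(x-x_0)$; this is a local principal ideal domain (in fact a discrete valuation ring) whose unique valuation $v_{x_0}$ records the multiplicity of $(x-x_0)$ as a factor. The elementary divisors of $P(x)$ associated with $x_0$ are exactly the nonzero powers $(x-x_0)^{e}$ with $e=v_{x_0}(d_i(x))$ over the invariant polynomials $d_i(x)$. Second, I would use the hypothesis crucially: because $x_0$ is not a root of $\det A(x)$, the determinant $\det A(x)$ is a \emph{unit} in $\mathbb{F}[x]_{(x_0)}$, so $A(x)$ is invertible \emph{over the local ring} $\mathbb{F}[x]_{(x_0)}$ (its inverse is the adjugate divided by the determinant, and the denominator is now invertible). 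The same holds for $B(x)$. In other words, over $\mathbb{F}[x]_{(x_0)}$ the matrices $A(x)$ and $B(x)$ play exactly the role that unimodular matrices play over $\mathbb{F}[x]$.

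Third, I would invoke the Smith normal form over the local PID $\mathbb{F}[x]_{(x_0)}$: $P(x)$ admits a local Smith form $U(x)P(x)V(x)=\mathrm{diag}(\dots,(x-x_0)^{e_i},\dots)$ with $U,V$ invertible over the local ring, and these local invariant factors are precisely the elementary divisors of $P(x)$ at $x_0$. Since $A(x)$ and $B(x)$ are themselves invertible over $\mathbb{F}[x]_{(x_0)}$, the matrix $Q(x)=A(x)P(x)B(x)$ has the identical local Smith form, because $(UA^{-1})\,Q(x)\,(B^{-1}V)=U(x)P(x)V(x)$ and the outer factors $UA^{-1}$ and $B^{-1}V$ remain invertible over the local ring. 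Equivalence over $\mathbb{F}[x]_{(x_0)}$ preserves the local invariant factors, so $P(x)$ and $Q(x)$ share the same elementary divisors at $x_0$.

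The main obstacle I anticipate is making the passage to the localization fully rigorous while staying within the elementary toolkit the paper has set up, since the excerpt only states the global Smith form (Theorem \ref{smiththeorem}) and never mentions localizations. To avoid appealing to machinery the paper hasn't introduced, the cleaner route is the \emph{determinantal divisor} formulation: define $D_i(x)$ as the monic GCD of all $i\times i$ minors of $P(x)$, so that the invariant polynomials satisfy $d_i=D_i/D_{i-1}$ and the elementary divisors at $x_0$ depend only on $v_{x_0}(D_i(x))$ for each $i$. The Cauchy–Binet formula expresses each $i\times i$ minor of $Q(x)=A(x)P(x)B(x)$ as an $\mathbb{F}[x]$-linear combination of $i\times i$ minors of $P(x)$, which shows $D_i^{P}(x)\mid D_i^{Q}(x)$; applying the symmetric argument to $P(x)=A^{-1}(x)Q(x)B^{-1}(x)$ after clearing denominators, together with the fact that $\det A(x_0)\neq 0\neq \det B(x_0)$, will show that $D_i^{P}$ and $D_i^{Q}$ differ only by a factor coprime to $(x-x_0)$, hence $v_{x_0}(D_i^P)=v_{x_0}(D_i^Q)$ for all $i$. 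This yields equality of the local exponents and therefore of the elementary divisors at $x_0$. The delicate bookkeeping is exactly this last coprimality step, where one must verify that the extra factors introduced by the adjugates carry no power of $(x-x_0)$, which is where the hypothesis on the roots of $\det A$ and $\det B$ is indispensable.
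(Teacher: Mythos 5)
Your proposal is correct, but it follows a genuinely different route from the paper. The paper proves Lemma \ref{lancaster} by building a theory of \emph{root polynomials} for singular and rectangular polynomial matrices (Section \ref{app}): it defines $\ker_{x_0}P(x)$ via a minimal basis of $\ker P(x)$, generalises \cite[Proposition 1.11]{bible} to show that $v(x)$ is a root polynomial of order $\ell$ at $x_0$ for $A(x)P(x)B(x)$ exactly when $B(x)v(x)$ is one for $P(x)$ (Proposition \ref{PandAPB}), and then proves that the orders of a maximal set of $x_0$-independent root polynomials are precisely the exponents of the elementary divisors at $x_0$ (Proposition \ref{rootandsmith}); the Lemma follows by combining these two facts. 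You instead work with determinantal divisors $D_i(x)$ (equivalently, with the Smith form over the discrete valuation ring obtained by localising $\mathbb{F}[x]$ at $(x-x_0)$): Cauchy--Binet applied to $Q=APB$ gives $D_i^P \mid D_i^Q$; the adjugate identity $\mathrm{adj}(A)\,Q\,\mathrm{adj}(B)=\det(A)\det(B)\,P$ gives $D_i^Q \mid (\det A \det B)^i D_i^P$; and since $x_0$ is a root of neither determinant, the two valuations $v_{x_0}(D_i^P)$ and $v_{x_0}(D_i^Q)$ coincide, whence the elementary divisors at $x_0$ agree via $d_i = D_i/D_{i-1}$. This is sound, and it is shorter and more classical than the paper's argument, resting only on Cauchy--Binet and GCD bookkeeping; the small debts you incur are the determinantal-divisor characterisation of the invariant polynomials, which the paper never states (it is classical and can be found in \cite{tom}), and the observation that $\mathrm{rank}\,Q = \mathrm{rank}\,P$ because $A$ and $B$ are regular, so the set of indices $i$ with $D_i \neq 0$ is the same for both matrices. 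What the paper's heavier machinery buys is reuse: root polynomials are exactly the tool needed, in the Remark closing Section \ref{app}, to describe how Jordan chains of $P(x)$ at $x_0$ transform into Jordan chains of $Q(y)$ at $y_0$ under the rational substitution, information that the determinantal or localisation argument does not provide.
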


\subsection{Case 1}
Define $\nu:=\min(m,p)$, and 
let $P(x)=A(x)T(x)B(x)$ where $A(x)$ and $B(x)$ are unimodular polynomial matrices, 
$T(x)=:\mathrm{diag}(\delta_1(x),\dots,\delta_\nu(x))$ is the Smith form of $P(x)$, 
and $\delta_i(x)$ are its invariant polynomials.  
Let now $\hat{Q}(y):=\Phi(A(x))\Phi(T(x))\Phi(B(x))$. Clearly, $\hat{Q}(y)$ and $Q(y)$ differ only for a 
multiplicative factor $[d(y)]^{\lambda}$, $\lambda \in \mathbb{N}$; moreover, 
both $\det \Phi(A(x))$ and 
$\det \Phi(B(x))$ are nonzero whenever $d(y)\neq 0_{\mathbb{F}}$.
Notice that, if $x_0$ is finite, then for any $y_0 \in T_{x_0}$ there must hold $d(y_0)\neq 0_{\mathbb{F}}$ 
(Corollary \ref{nocommonroot}). Therefore, Lemma \ref{lancaster} implies that $Q(y)$, $\hat{Q}(y)$ and $S(y)$ have 
the same elementary divisors corresponding to $y_0$.

Unfortunately, $\Phi(T(x))$ may not be the Smith form of $\hat{Q}(y)$, because neither $\Phi(A(x))$ nor $\Phi(B(x))$ 
are necessarily unimodular and 
also because $\Phi(\delta_i(x))$ may not be monic. Nevertheless, it has the form 
$\mathrm{diag}([d(y)]^{k_1}\hat{\delta}_1(y), \dots,  [d(y)]^{k_{\nu}}\hat{\delta}_{\nu}(y))$, where 
$k_1 \geq k_2 \geq \dots \geq k_{\nu}$ and 
$\hat{\delta_i}(y):=\Phi(\delta_i(x))$. From Corollary \ref{nocommonroot}, $\hat{\delta}_i(y)$ and $d(y)$ cannot share common roots. 
To reduce $S(y)$ into a Smith form, we proceed by steps working on $2 \times 2$ principal submatrices.

In each step, we consider the submatrix $\left[\begin{smallmatrix} [d(y)]^{\gamma}
\hat{\delta}_i(y) & 0\\0 & 
[d(y)]^{\phi}\hat{\delta}_{j}(y)\end{smallmatrix}\right]$, where $\gamma:=k_i$ and $\phi:=k_j$, with $i<j$. If
$\gamma=\phi$, then do nothing; 
if $\gamma > \phi$, premultiply the submatrix by $\left[\begin{smallmatrix} 1_{\mathbb{F}} &
1_{\mathbb{F}}\\-b(y)q(y) & 1_{\mathbb{F}}-b(y)q(y)\end{smallmatrix}\right]$ 
and postmultiply it by $\left[\begin{smallmatrix} a(y) &
-q(y)\\b(y) & [d(y)]^{\gamma-\phi}\end{smallmatrix}\right]$, 
where $q(y)=\hat{\delta}_{j}(y)/\hat{\delta}_i(y)$ while $a(y)$
and $b(y)$ are such that $a(y) [d(y)]^{\gamma} 
\hat{\delta}_i(y) + b(y) [d(y)]^{\phi} \hat{\delta}_{j}(y) =
[d(y)]^{\phi} \hat{\delta}_i(y)$; the existence of two such 
polynomials is guaranteed by Bezout's lemma, since $[d(y)]^{\phi} \hat{\delta}_i(y)$ is
the greatest common divisor of $[d(y)]^{\gamma} \hat{\delta}_i(y)$ 
and $[d(y)]^{\phi}\hat{\delta}_{j}(y)$. It is easy to check that both matrices are
unimodular, and that the result of the matrix multiplications 
is $\left[\begin{smallmatrix} [d(y)]^{\phi}\hat{\delta}_i(y) & 0\\0 &
[d(y)]^{\gamma} \hat{\delta}_{j}(y)\end{smallmatrix}\right]$. 
Hence, by subsequent applications of this algorithm and after having defined a unimodular diagonal matrix 
$\Delta \in \mathbb{F}^{\nu \times \nu}$ chosen in such a way that the invariant polynomials of $S(y)$ are monic, it is possible 
to conclude that the Smith form of
$\Phi(T(x))$ is either $S(y):=\Delta \cdot \hat{S}(y)$ or $S(y):= \hat{S}(y)\cdot \Delta$ (whichever of the two products makes sense, 
depending on whether $m \leq p$ or not), where
$$\hat{S}(y)=\textrm{diag}(
[d(y)]^{k_m}\hat{\delta}_1(y),\dots,[d(y)]^{k_1}\hat{\delta}_m(y)).$$
Thus, the $i$th invariant polynomial of $P(x)$ has a root of multiplicity $\ell_i$ at $x_0$ if and only if the $i$th invariant 
polynomial of $\hat{Q}(y)$ has a root of multiplicity $m_0 \ell_i$ at $y_0 \in T_{x_0}$.
\subsection{Case 2}
By definition, the infinite elementary divisors for a given polynomial matrix are the elementary divisors corresponding 
to zero of the reversal of such polynomial matrix. Therefore, in order to prove Theorem \ref{main} for the case of $y_0=\infty$, we 
have to analyse the polynomial matrix $Z(y):=\mathrm{Rev}_{(gG)} Q(y) = y^{gG} [d(y^{-1})]^g P(x(y^{-1}))$, and find out what its 
relation to $P(x)$ is, with particular emphasis to its elementary divisors corresponding to $y_0=0_{\mathbb{F}}$.
Recalling Remark \ref{discu}, 
notice that there are two distinct subcases for which $\infty \in T_{x_0}$ for a finite 
$x_0 \in \mathbb{F}$. We will consider them separately.
\subsubsection{Subcase 2.1: $N=D=G$, $x_0=n_G d_G^{-1}$}
We get $x(y^{-1})=\frac{\mathrm{Rev} n(y)}{\mathrm{Rev} d(y)}$ and $y^G d(y^{-1})=\mathrm{Rev} d(y)$; therefore 
$Z(y)=[\mathrm{Rev} d(y)]^g P(\frac{\mathrm{Rev} n(y)}{\mathrm{Rev} d(y)})$. This means that we can prove analogous results for 
$Z(y)$ just by considering this time the new rational transformation $y \rightarrow x=\frac{\mathrm{Rev} n(y)}{\mathrm{Rev} d(y)}$. 
From Remark \ref{discu}, $0_{\mathbb{F}}$ is a root of multiplicity $G-S$ for the equation 
$\mathrm{Rev} n(y)=x_0 \mathrm{Rev} d(y)$; moreover, since we took the reversal with respect to the degree 
(or also because of Corollary \ref{nocommonroot}), $0_{\mathbb{F}}$ cannot be a root of $\mathrm{Rev}d(y)$.
Therefore, following the proof given above, one can state that $P(x)$ has $(x-x_0)^{\ell_1}, \dots, (x-x_0)^{\ell_j}$ as 
elementary divisors corresponding to $x_0$ if and only if $Z(y)$ has the $j$ elementary divisors 
$y^{(G-S) \ell_1}, \dots, y^{(G-S)\ell_j}$ corresponding to $0_{\mathbb{F}}$. The thesis follows immediately.
\subsubsection{Subcase 2.2: $N<D=G$, $x_0 = 0_{\mathbb{F}}$}
This time, we can write $x(y^{-1})=\frac{y^{G-N}\mathrm{Rev} n(y)}{\mathrm{Rev} d(y)}$ and $Z(y)=[\mathrm{Rev} d(y)]^g$   $P(\frac{y^{G-N}\mathrm{Rev} n(y)}{\mathrm{Rev} d(y)})$. It is therefore sufficient to consider the transformation $y \rightarrow x=y^{G-N}\frac{\mathrm{Rev} n(y)}{\mathrm{Rev} d(y)}$.

In fact, notice that $0_{\mathbb{F}}$ is a solution of multiplicity $G-N$ for the equation $y^{G-N}\mathrm{Rev} n(y)=0$ ($0_{\mathbb{F}}$ is neither a root of $\mathrm{Rev} n(y)$ nor a root of $\mathrm{Rev} d(y)$, because 
$\mathrm{Rev} n(0_{\mathbb{F}})=n_N \neq 0_{\mathbb{F}}$ and $\mathrm{Rev} d(0_{\mathbb{F}})=d_D \neq 0_{\mathbb{F}}$). Thus, $P(x)$ has the $j$ elementary divisors $x^\ell_1, \dots, x^\ell_j$ corresponding to $0_{\mathbb{F}}$ if and only if $Z(y)$ has the $j$ elementary divisors  $y^{(G-N) \ell_1}, \dots, y^{(G-N)\ell_j}$ corresponding to $0_{\mathbb{F}}$, and the thesis follows.
\subsection{Case 3}
By definition, the infinite elementary divisors of $P(x)$ are the elementary divisors corresponding to the characteristic value $0_{\mathbb{F}}$ for $R(x):=\mathrm{Rev}_g P(x)=x^g P(x^{-1})$. But let $\Psi_{g,n(y),d(y)}=\Phi_{g,d(y),n(y)}$ and $U(y)=\Psi_g(R(x))$, that is to say $U(y) = [n(y)]^g R(\frac{d(y)}{n(y)})$. A simple calculation gives 
\[U(y)=[n(y)]^g[\frac{d(y)}{n(y)}]^g P([\frac{d(y)}{n(y)}]^{-1})=[d(y)]^g P(\frac{n(y)}{d(y)})=\Phi_g(P(y))=Q(y).\] 

One can therefore follow the proof as in the previous Subsections, but starting from $R(x)$ and using a different transformation (notice 
that the equation $d(y)=0_{\mathbb{F}}$ defines both 
$T_{\infty}$ for the old transformation and $T_{0_{\mathbb{F}}}$ for the new transformation).

\section{Proof of Theorem \ref{main}: minimal indices}\label{proof2}
We shall only prove the theorem for right minimal indices. The proof for left minimal indices follows from the proof for right minimal indices and from the fact that $\Phi$ and the operation of transposition commute, that is $\Phi_g(P^T(x))=(\Phi_g(P(x)))^T$ $\forall$ $P(x) \in (\mathbb{F}[x])^{m \times p}$.
\subsection{$\Rightarrow$}
Let $\dim \ker P(x)=s$, and $V(x)=[v_1(x),\dots,v_s(x)]$ be a minimal basis for $\ker P(x)$, with minimal indices 
$\beta_i:=\deg v_i$ $\forall i=1,\dots,s$ and order $B:=\sum_{i=1}^s \beta_i$. For each value of $i$ let us define 
$w_i(y): = \Phi_{\beta_i}(v_i(x))$; we also define $W(y):=[w_1(y),\dots,w_s(y)]$. Clearly $\deg w_i(y) = G \beta_i$. 
Suppose in fact $\deg w_i(y) \neq G \beta_i$; applying Proposition \ref{degphipi} (in the case $g=k=\beta_i$), this would imply that there exists some $x_0 \in \mathbb{F}$ and some polynomial vector $u(x) \in (\mathbb{F}[x])^p$ such that $v_i(x)=(x-x_0) u(x)$. Hence, $[v_1(x),\dots,(x-x_0)^{-1}v_i(x),\dots,v_s(x)]$ would be a polynomial basis of order $B-1$ for $\ker P(x)$, leading to a contradiction. In order to prove that $W(y)$ is a minimal basis for $\ker Q(x)$ we must show that it is a basis and that it is minimal.

Clearly $w_i(y)$ lies in $\ker Q(y)$ for all $i$. In fact, $P(x) v_i (x) = 0$ implies that $Q(y) w_i (y) = 0$. 
So it is sufficient to show that $W(y)$, considered as an element of $(\mathbb{F}(x))^{p \times s}$, has rank $s$. Notice that 
$W(y)=V(x(y))\cdot\mathrm{diag}([d(y)]^{\beta_1},\dots,[d(y)]^{\beta_s})$. A well-known property of the rank is that, if 
$A_1=A_2 A_3$ and $A_3$ is square and regular, then $\mathrm{rk}(A_1)=\mathrm{rk}(A_2)$. Therefore 
$\mathrm{rk}(W(y))=\mathrm{rk}(V(x(y)$), because the diagonal matrix above is regular. Let $\hat{V}(x)$ be some regular 
$s \times s$ submatrix of $V(x)$, which exists because $\mathrm{rk}(V(x))=s$. 
By hypothesis, $\det(\hat{V}(x)) \neq 0_{\mathbb{F}[x]}$, which implies $\det(\hat{V}(x(y)))\neq 0_{\mathbb{F}(y)}$. Hence $s=\mathrm{rk}(V(x(y)))=\mathrm{rk}(W(y))$. Then $W(y)$ is a basis.

In order to prove that it is minimal, let us introduce the following lemma whose proof can be found in \cite{forneyjr}.

\begin{lemma}\label{forneyjr}
Let $\mathcal{V}$ be a vector subspace of $\mathbb{F}(x)^p$, with $\dim \mathcal{V} = s$. Let $H=[h_1,\dots,h_s]$ be a polynomial 
basis of order $A$ for $\mathcal{V}$ and define $\xi_i$, $i=1,\dots,\left(\begin{smallmatrix}
p\\
s\\
\end{smallmatrix}\right)$ to be the $s \times s$ minors (i.e. determinants of $s \times s$ submatrices) of $H$. Then the following statements are equivalent:
\begin{itemize}
\item $H$ is a minimal basis for $\mathcal{V}$
\item The following conditions are both true: (a) $\mathrm{GCD}(\xi_1, \dots, \xi_r)=1_{\mathbb{F}[x]}$ and (b) $\max_i$ $\deg \xi_i = A$.
\end{itemize}
\end{lemma}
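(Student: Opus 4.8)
The plan is to reduce Lemma~\ref{forneyjr} to two pointwise rank conditions on $H$ and to attach conditions (a) and (b) to one of them each. Write $\alpha_i=\deg h_i$, let $A=\sum_i\alpha_i$, and index the $s\times s$ minors by the $s$-element subsets $I\subseteq\{1,\dots,p\}$ of selected rows, writing $\xi_I:=\det H[I,:]$ (these are exactly the $\xi_i$ of the statement). Let $H_{hc}$ be the $p\times s$ \emph{leading coefficient matrix}, whose $i$-th column is the coefficient of $x^{\alpha_i}$ in $h_i$. First I would record the two equivalences. For (a): since $\mathbb{F}$ is algebraically closed, $\mathrm{GCD}$ of the $\xi_I$ differs from $1_{\mathbb{F}[x]}$ if and only if the $\xi_I$ share a root $x_0\in\mathbb{F}$, i.e.\ if and only if $\mathrm{rk}\,H(x_0)<s$ for some $x_0$; thus (a) holds exactly when $H$ has full column rank at every point of $\mathbb{F}$. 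For (b): expanding a determinant shows $\deg\xi_I\le A$ for every $I$, with the coefficient of $x^A$ in $\xi_I$ equal to the $I$-minor of $H_{hc}$; hence $\max_I\deg\xi_I=A$ if and only if some $s\times s$ minor of $H_{hc}$ is nonzero, i.e.\ exactly when $\mathrm{rk}\,H_{hc}=s$.

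For the forward implication I would argue by contraposition, showing that failure of either rank condition lets one strictly lower the order. If $\mathrm{rk}\,H(x_0)<s$ for some $x_0\in\mathbb{F}$, choose $c_1,\dots,c_s\in\mathbb{F}$, not all zero, with $\sum_i c_i h_i(x_0)=0$, so that $v(x):=\sum_i c_i h_i(x)$ is divisible by $(x-x_0)$; replacing the column $h_j$ of largest degree among those with $c_j\neq 0$ by the polynomial vector $v/(x-x_0)$ yields another polynomial basis of $\mathcal{V}$ (the change of columns is invertible over $\mathbb{F}(x)$ because $c_j\neq 0$) whose order is strictly smaller, contradicting minimality. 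If instead $\mathrm{rk}\,H_{hc}<s$, pick $c$ with $H_{hc}c=0$, put $\mu=\max\{\alpha_i:c_i\neq 0\}$ and $v(x)=\sum_i c_i x^{\mu-\alpha_i}h_i(x)$; the coefficient of $x^\mu$ in $v$ is $H_{hc}c=0$, so $\deg v<\mu$, and replacing a degree-$\mu$ column $h_j$ (with $c_j\neq 0$) by $v$ again produces a polynomial basis of strictly smaller order. Hence any minimal basis satisfies both rank conditions, i.e.\ (a) and (b).

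For the converse I would fix \emph{some} minimal basis $H'=[h'_1,\dots,h'_s]$ of $\mathcal{V}$, which exists and, by the forward implication already proved, satisfies (a) and (b); call its order $A'$. Since $H$ and $H'$ are two bases of the same $s$-dimensional space over $\mathbb{F}(x)$, there is a nonsingular $U\in(\mathbb{F}(x))^{s\times s}$ with $H=H'U$, and for every row-subset $I$ multiplicativity of the determinant gives $\xi^H_I=(\det U)\,\xi^{H'}_I$. Writing $\det U=p/q$ with $p,q\in\mathbb{F}[x]$ coprime, the polynomial identities $q\,\xi^H_I=p\,\xi^{H'}_I$ together with $\mathrm{GCD}_I\xi^H_I=\mathrm{GCD}_I\xi^{H'}_I=1_{\mathbb{F}[x]}$ yield, upon taking the $\mathrm{GCD}$ over $I$, that $q$ and $p$ agree up to a unit; coprimality then forces both to be constant, so $\det U\in\mathbb{F}\setminus\{0\}$. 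Consequently $\xi^H_I$ and $\xi^{H'}_I$ differ only by a nonzero scalar, so by condition (b) for each basis $A=\max_I\deg\xi^H_I=\max_I\deg\xi^{H'}_I=A'$; thus $H$ attains the minimal order and is itself a minimal basis.

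I expect the main obstacle to be the two order-reduction steps of the forward direction: one must verify that each column replacement genuinely leaves the vectors polynomial and linearly independent over $\mathbb{F}(x)$ (so that a \emph{polynomial basis} of $\mathcal{V}$ results) and that the order drops strictly, which requires choosing the replaced column so that its degree actually decreases. Once these two reductions and the equivalences for (a) and (b) are in place, the converse is essentially formal, relying only on the minor identity $\xi^H_I=(\det U)\,\xi^{H'}_I$.
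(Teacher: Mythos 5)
Your proof is correct, but note that the paper contains no proof of Lemma \ref{forneyjr} to compare against: the lemma is quoted verbatim and its proof is delegated to Forney's article \cite{forneyjr}, after which it is used as a black box in Section \ref{proof2}. The relevant comparison is therefore with the classical argument, and your route is essentially that one. You recast condition (a) as ``$H(x_0)$ has full column rank for every $x_0 \in \mathbb{F}$'' (here you use the paper's standing assumption that $\mathbb{F}$ is algebraically closed, which is exactly the hypothesis in force where the lemma is applied) and condition (b) as full column rank of the leading-coefficient matrix $H_{hc}$; you then prove ``minimal $\Rightarrow$ (a), (b)'' by the two standard order-reducing column replacements, and the converse by comparing $H$ with an existing minimal basis $H'$ through $H = H'U$. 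All the delicate points are handled: in the first reduction the replacing vector $v/(x-x_0)$ is polynomial because $v(x_0)=H(x_0)c=0$, nonzero by $\mathbb{F}(x)$-linear independence of the columns, the column substitution is invertible over $\mathbb{F}(x)$ (its determinant is $c_j/(x-x_0)$, resp.\ $c_j$ in the second reduction), and the order drops strictly because the replaced column is chosen of maximal degree among those with $c_j \neq 0$; in the converse, the minor identity $\xi^H_I = (\det U)\,\xi^{H'}_I$ together with multiplicativity of the GCD and condition (a) for both bases forces $\det U \in \mathbb{F}\setminus\{0\}$, after which (b) identifies $A$ with the minimal order $A'$. This is a complete, self-contained proof of a statement the paper only cites, which is a genuine addition; the price is that it needs algebraic closedness (or an immersion into the closure, as in Section \ref{comments}) in the step turning $\mathrm{GCD} \neq 1_{\mathbb{F}[x]}$ into a common root, whereas Forney's original formulation does not require it.
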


So let $\xi_i(y)$ be the $s \times s$ minors of $W(y)$. We shall prove that (a) their GCD is $1_{\mathbb{F}[y]}$ and (b) their maximal degree is $G B = G \sum_{i=1}^s \beta_i$. By Lemma \ref{forneyjr}, these two conditions imply that $W(y)$ is minimal. Recall that $w_i(y) = \Phi_{\beta_i} (v_i(x))$, that is to say $w_i (y)=[d(y)]^{\beta_i} v_i(x(y))$. Any $s \times s$ submatrix of $W(y)$ is therefore obtained from the corresponding $s \times s$ submatrix of $V(x)$ by applying the substitution $x=x(y)$ and then multiplying the $i$th column by $[d(y)]^{\beta_i}$ for $i=1,\dots,s$. Let us call $\zeta_i(x)$ the $s \times s$ minors of $V(x)$. From the properties of determinants we obtain the relation $\xi_i (y)= \left(\prod_{i=1}^s [d(y)]^{\beta_i} \right) \zeta_i (x(y)) = [d(y)^B] \zeta_i (x(y)) = \Phi_B (\zeta_i (x))$.

Now for each $i$ let $\gamma_i := \deg \zeta_i (x)$ and $\delta_i :=\deg \xi_i (y) \leq \max_{j \leq \gamma_i}(N j-D j) + DB$ where the maximum is taken over those values of $j$ such that the $j$th coefficient of $\xi_i(y)$ is nonzero (Lemma \ref{degQ}). There are two cases. 
If $N \leq D=G$, $\delta_i \leq GB$, and applying Proposition \ref{degphipi} (with $g=B$), the inequality holds if and only if 
if $(x-\hat{x})|\zeta_i(x)$, where $\hat{x}=0_{\mathbb{F}}$ if $N<D$ and $\hat{x}=n_G d_G^{-1}$ if $N=D$;
notice that there must be at least one value of $i$ for which $\delta_i = GB$, otherwise $(x-\hat{x})$ would be a common factor of all 
the $\zeta_i(x)$, which is not possible because of Lemma \ref{forneyjr}. 
Finally, if $D<N=G$, $\delta_i = \gamma_i G + (B - \gamma_i) D$. Since $V(x)$ is minimal we have $\max_i(\gamma_i)=B$, 
which implies that also in this case $\max_i(\delta_i)=GB$. This proves condition (b).

Notice moreover that $\xi_i (y)=\Phi_B(\zeta_i(x))=[d(y)]^{B-\gamma_i}\Phi_{\gamma_i}(\zeta_i(x))$, where the first and the second factor are coprime (because of Corollary \ref{nocommonroot}). Let us prove the following Lemma.

\begin{lemma}\label{phiofgcd}
Let $p, q, r \in \mathbb{F}[x]$ with $r$ monic. Then, $\mathrm{GCD}_{\mathbb{F}[x]}(p,q)=r$ if and only if $\mathrm{GCD}_{\mathbb{F}[y]}(\Phi_{\deg p}(p),\Phi_{\deg q}(q))=\kappa \cdot \Phi_{\deg r}(r)$, where $\kappa \in \mathbb{F}$ is such that $\kappa \cdot \Phi_{\deg r}(r)$ is monic.
\end{lemma}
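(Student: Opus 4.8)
The plan is to turn $\Phi$ into a multiplicative gadget and then read off both greatest common divisors from the factorisation of $p$, $q$ and $r$ over the algebraically closed field $\mathbb{F}$. The first thing I would record is that, \emph{when taken at the degree}, $\Phi$ is multiplicative on $\mathbb{F}[x]$: since $\deg(fg)=\deg f+\deg g$,
\[\Phi_{\deg(fg)}(fg)=[d(y)]^{\deg f+\deg g}(fg)(x(y))=\Phi_{\deg f}(f)\,\Phi_{\deg g}(g).\]
Writing $p(x)=p_k\prod_j(x-x_j)^{a_j}$ over its distinct roots $x_j\in\mathbb{F}$, and using $\Phi_1(x-x_j)=n(y)-x_j d(y)$ together with $\Phi_0(p_k)=p_k$, this gives
\[\Phi_{\deg p}(p)=p_k\prod_j\bigl(n(y)-x_j d(y)\bigr)^{a_j},\]
and likewise $\Phi_{\deg q}(q)=q_l\prod_j\bigl(n(y)-x_j d(y)\bigr)^{b_j}$ on the same index set of roots.

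For the forward implication I would invoke Corollary \ref{nocommonroot}: for $x_i\neq x_j$ the factors $n(y)-x_i d(y)$ and $n(y)-x_j d(y)$ are coprime in $\mathbb{F}[y]$. Hence the two products above are organised into pairwise-coprime blocks indexed by the $x_j$, and the greatest common divisor distributes block by block, each block contributing $\bigl(n(y)-x_j d(y)\bigr)^{\min(a_j,b_j)}$. Since $r=\mathrm{GCD}(p,q)=\prod_j(x-x_j)^{\min(a_j,b_j)}$ is monic,
\[\mathrm{GCD}_{\mathbb{F}[y]}\bigl(\Phi_{\deg p}(p),\Phi_{\deg q}(q)\bigr)=\kappa\prod_j\bigl(n(y)-x_j d(y)\bigr)^{\min(a_j,b_j)}=\kappa\,\Phi_{\deg r}(r),\]
the scalar $\kappa$ being exactly the factor that restores monicity (the blocks $n(y)-x_j d(y)$ are generally not monic). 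Equivalently, one may phrase the same computation as: $p=r\,p_1$, $q=r\,q_1$ with $\mathrm{GCD}(p_1,q_1)=1$, so by multiplicativity $\mathrm{GCD}(\Phi p,\Phi q)=\Phi(r)\,\mathrm{GCD}(\Phi p_1,\Phi q_1)$, and a short argument on common roots (splitting on whether $d(y_0)=0$, the degenerate branch being excluded by coprimality of $n$ and $d$) shows $\mathrm{GCD}(\Phi p_1,\Phi q_1)=1$.

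For the converse I would apply the forward direction to the genuine monic gcd $r'=\mathrm{GCD}(p,q)$, obtaining $\mathrm{GCD}(\Phi p,\Phi q)=\kappa'\Phi_{\deg r'}(r')$; comparing with the hypothesis and using uniqueness of the monic gcd in $\mathbb{F}[y]$ shows that $\Phi_{\deg r}(r)$ and $\Phi_{\deg r'}(r')$ coincide up to a scalar. It then remains to match the multiplicities of the irreducible factors inside each coprime block $n(y)-x_j d(y)$ to force the exponent vectors, and hence $r$ and $r'$, to agree.

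The hard part will be precisely this last multiplicity-matching step, i.e.\ establishing that $\Phi$ (at the degree) is injective up to scalars on monic polynomials. For every root $x_j$ whose fibre $T_{x_j}$ contains a finite point the block $n(y)-x_j d(y)$ is nonconstant, its irreducible factors pin down the exponent $c_j$, and the argument closes cleanly. The one configuration demanding separate attention is a root with $T_{x_j}=\{\infty\}$ (so $n(y)-x_j d(y)$ collapses to a nonzero constant and contributes only to $\kappa$): there the exponent cannot be recovered from the $y$-side alone, and I would isolate that case explicitly — either ruling it out under the genericity hypothesis of the transformation or checking that it is harmless in the intended application, where only the implication ``$\mathrm{GCD}(p,q)=1\Rightarrow\mathrm{GCD}(\Phi p,\Phi q)=1$'' is actually invoked.
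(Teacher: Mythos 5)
Your forward implication is, modulo presentation, exactly the paper's proof: factor $p$, $q$, $r$ into linear factors over the algebraically closed field, use multiplicativity of $\Phi$ taken at the degree to write $\Phi_{\deg p}(p)=p_k\prod_j\bigl(n(y)-x_jd(y)\bigr)^{a_j}$ (and similarly for $q$ and $r$), and invoke Corollary \ref{nocommonroot} to see that blocks belonging to distinct roots are coprime in $\mathbb{F}[y]$, so the greatest common divisor can be computed block by block. The paper compresses this into three lines ending with ``the thesis follows by invoking Corollary \ref{nocommonroot}''; your version with the explicit multiplicativity identity and the $d(y_0)=0$ case check is the same argument spelled out.

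The degenerate case you isolate at the end is not a pedantic worry: it is a genuine counterexample to the ``if'' direction of the lemma as stated, and the paper's one-line conclusion silently passes over it. Take $n(y)=y^2+1$, $d(y)=y^2$ (coprime, $G=2$), and $p=(x-1)^2$, $q=x-1$, $r=(x-1)^2$. Here $n(y)-d(y)=1_{\mathbb{F}[y]}$, i.e.\ $T_1=\{\infty\}$, and one computes $\Phi_2(p)=\Phi_1(q)=\Phi_2(r)=1_{\mathbb{F}[y]}$, so the right-hand side of the lemma holds with $\kappa=1$, while $\mathrm{GCD}_{\mathbb{F}[x]}(p,q)=x-1\neq r$. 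Exactly as you say, unique factorization in $\mathbb{F}[y]$ recovers the exponents only of the nonconstant blocks, and a block $n(y)-x_jd(y)$ is a nonzero constant precisely when $T_{x_j}=\{\infty\}$ (possible when $N=D=G$ with $x_j=n_Gd_G^{-1}$, or when $0=N<D$ with $x_j=0_{\mathbb{F}}$); this also shows that $\Phi$ taken at the degree is \emph{not} injective up to scalars, which does not contradict Proposition \ref{nobij} since there the grade is held fixed. Of your two proposed repairs, only the second is available: ``generic'' in this paper means \emph{arbitrary}, so no genericity hypothesis can be invoked, but what saves the main results is that Section \ref{proof2} uses the lemma only through the forward implication --- applied iteratively and pairwise to deduce $\mathrm{GCD}_i(\Phi_{\gamma_i}(\zeta_i(x)))=1_{\mathbb{F}[y]}$ from $\mathrm{GCD}_i(\zeta_i(x))=1_{\mathbb{F}[x]}$ --- and that implication is valid without restriction, since constant blocks are units and are absorbed into $\kappa$. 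So the lemma ought to be weakened to the forward implication (or supplemented by the hypothesis that no fibre $T_{x_j}$ over a common root of $p$ and $q$ reduces to $\{\infty\}$); Theorem \ref{main} itself is unaffected.
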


\begin{proof}
Let $\alpha, \beta$ be two suitable elements of $\mathbb{F}$ and let us write the prime factor decompositions 
$p=\alpha \cdot \prod (x - p_i)^{\pi_i}$, $q=\beta \cdot \prod (x - q_i)^{\theta_i}$, $r=\prod (x - r_i)^{\rho_i}$. Of course we have 
that $(x-r_i)^{\rho_i}|r$ if and only if $(x-p_i)^{\pi_i}|p$, $(x-q_i)^{\theta_i}|q$ and $\rho_i = \min (\pi_i,\theta_i)$.
We get $\Phi_{\deg p}(p) = \alpha \cdot \prod (n(y) - p_i d(y))^{\pi_i}$, 
$\Phi_{\deg q}(q) = \beta \cdot \prod (n(y) - q_i d(y))^{\theta_i}$ and 
$\Phi_{\deg r}(r) = \prod (n(y) - r_i d(y))^{\rho_i}$. The thesis follows by invoking Corollary \ref{nocommonroot}.
\end{proof}

Lemma \ref{phiofgcd} implies condition (a). This follows from the equation $\mathrm{GCD}_i(\xi_i(y))=\mathrm{GCD}_i([d(y)]^{B-\gamma_i}) \cdot \mathrm{GCD}_i (\Phi_{\gamma_i}(\zeta_i(x)))=1_{\mathbb{F}[y]} \cdot 1_{\mathbb{F}[y]}$, where the first $1_{\mathbb{F}[y]}$ comes from the fact that $\max_i(\gamma_i)=B$, while the second $1_{\mathbb{F}[y]}$ comes by applying the previous Lemma to $\mathrm{GCD}(\xi_1(y),\dots,\xi_s(y))=\mathrm{GCD}(\mathrm{GCD}(\dots\mathrm{GCD}(\xi_{2}(y),\xi_{1}(y))\dots))$ and from the identity $\Phi_0(1_{\mathbb{F}[x]})=1_{\mathbb{F}[y]}$.
\subsection{$\Leftarrow$}
To complete the proof, suppose now that $Q(y)=\Phi_g(P(x))$ for some $P(x) \in \mathbb{F}[x]$ and that $\hat{W}(y)$ is a minimal basis for $\ker Q(y)$, with minimal indices $\epsilon_1 \leq \dots \leq \epsilon_s$. The other implication that we proved in the previous subsection implies that $G|\epsilon_i$ $\forall$ $i$, so define $\beta_i=\frac{\epsilon_i}{G}$. Suppose that there exists a minimal basis $\hat{V}(x)=(\hat{v}_1(x),\dots,\hat{v}_s(x))$ for $\ker P(x)$; suppose moreover that an index $i_0 \in \{1,\dots,s\}$ exists such that $\deg \hat{v}_{i_0} \neq \beta_{i_0}$. Applying the reverse implication, this would imply that there is a minimal basis $\tilde{W}(y)=(\tilde{w}_1(y),\dots,\tilde{w}_s(y))$ for $\ker Q(y)$ whose $i_0$th right minimal index is not equal to $\epsilon_{i_0}$. This is absurd because every minimal basis has the same minimal indices.
\section{Extension to more relaxed hypotheses}\label{comments}
For the sake of convenience, we have so far assumed that the field $\mathbb{F}$ is algebraically closed. 
This unnecessary hypothesis can be dropped. To see it, assume that $\mathbb{F}$ is not algebraically closed and 
let $\mathbb{K}$ be the algebraic closure of $\mathbb{F}$. 
Then $(\mathbb{F}[x])^{m \times p} \subseteq (\mathbb{K}[x])^{m \times p}$, so we can use Theorem \ref{main} to identify the Smith forms 
of $P(x)$ and $Q(y)=\Phi_g(P(x))$ over the polynomial rings $\mathbb{K}[x]$ and $\mathbb{K}[y]$. We can then join back elementary 
divisors in $\mathbb{K}[x]$ and $\mathbb{K}[y]$ to form elementary divisors in $\mathbb{F}[x]$ and $\mathbb{F}[y]$. Of course, in this 
case an elementary divisor is no more necessarily associated with a characteristic value in $\mathbb{F}$. For instance, if 
$\mathbb{F}=\mathbb{Q}$, then the elementary divisor $x^2+2$ is not associated with any rational characteristic value, but if we consider 
the field of complex algebraic numbers $\mathbb{K}=\overline{\mathbb{Q}}$ then we can split it as $(x-\sqrt{2}i)(x+\sqrt{2}i)$ and 
associate it to the characteristic values $\pm \sqrt{2}i$. 
Similarly, the other results (e.g., Lemma \ref{phiofgcd}) that use the algebraic closure of $\mathbb{F}$ can be straightforwardly 
extended to a generic field 
$\mathbb{F}$ via an immersion into its algebraic closure $\mathbb{K}$.

\section{Proof of Lemma \ref{lancaster}}\label{app} 
Let $P(x) \in \mathbb{F}^{m \times p}[x]$. If $U(x):=[u_1(x), \ \dots, \ u_s(x)]$ is a minimal basis for $\ker P(x)$,
 we define $\ker_{x_0} P(x):=\mathrm{span} \left(\{u_1(x_0), \ \dots, \ u_s(x_0)\}\right) \subseteq \mathbb{F}^p$.
In general $\ker_{x_0} P(x)$ is a subset of $\ker P(x_0)$. It is 
a proper subset when $x_0$ is a characteristic value of $P(x)$, as is illustrated by the following example: let 
$\mathbb{F}=\mathbb{C}$ and
\[
P(x)=\left[\begin{array}{cccc}
                  x & 1 & 0 & 0\\
		  0 & x & 1 & 0\\
		  0 & 0 & 0 & 0\\
		  0 & 0 & 0 & x\\
                 \end{array}\right].
\]
Evaluating the 
polynomial at $0$, we get 
$\ker P(0)=\mathrm{span}(\{[1,0,0,0]^T,[0,0,0,1]^T\})$. 
On the other hand, a minimal basis for $\ker P(x)$ is $[1,-x,x^2,0]^T$, so $\ker_{0} P(x)=\mathrm{span}(\{[1,0,0,0]^T\})$.

We need now to slightly modify a definition given in \cite{bible} in order to extend it to
 the case of singular and/or rectangular polynomial
 matrices. 
A polynomial vector $v(x) \in (\mathbb{F}[x])^{p}$ is called a \emph{root polynomial} of order $\ell$ corresponding 
to $x_0$ for $P(x)$ if the following conditions are met:
\begin{enumerate}
\item $x_0$ is a zero of order $\ell$ for $P(x)v(x)$;
\item $v(x_0) \not\in \ker_{x_0} P(x)$.
\end{enumerate}

Observe that $v(x_0) \in \ker_{x_0} P(x) \Leftrightarrow
 \exists \ w(x) \in \ker P(x) \subseteq (\mathbb{F}(x))^p : w(x_0)=v(x_0)$. 
In fact, let $w(x)=U(x)c(x)$ for some $c(x) \in (\mathbb{F}(x))^s$ and $w(x_0)=v(x_0)$: 
then $v(x_0)=U(x_0)c(x_0) \in \ker_{x_0} P(x)$. Conversely, write $v(x_0)=U(x_0)c$ for some $c \in \mathbb{F}^s$ and notice that 
$U(x)c \in \ker P(x)$. Hence, condition 2. implies $v(x) \not\in \ker P(x)$.

In \cite[Proposition 1.11]{bible} it is shown that given three \emph{regular}
 polynomial matrices $P(x), A(x), B(x) \in M_n(x)$, and if $x_0$ is neither a root of $\det A(x)$ nor a root of $\det B(x)$, then $v(x)$ 
is a root polynomial of order $\ell$ for $A(x)P(x)B(x)$ corresponding to $x_0$ if and only  if $B(x)v(x)$ is a root polynomial of order
 $\ell$ corresponding to $x_0$ for $P(x)$. The next Proposition generalises this result.

\begin{proposition}\label{PandAPB}
 Let $P(x) \in \mathbb{F}^{m \times p}[x]$, $A(x) \in M_m(\mathbb{F}[x])$ and $B(x) \in M_p(\mathbb{F}[x])$. Suppose that both $A(x_0)$ 
and $B(x_0)$, with $x_0 \in \mathbb{F}$, are full rank matrices. Then $v(x)$ 
is a root polynomial of order $\ell$ corresponding to $x_0$ for $A(x)P(x)B(x)$ if and only  if $B(x)v(x)$ is a root polynomial of order
 $\ell$ corresponding to $x_0$ for $P(x)$.
\end{proposition}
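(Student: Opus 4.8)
The plan is to verify the two defining conditions of a root polynomial separately, exploiting that for a square matrix, full rank at $x_0$ is the same as invertibility at $x_0$. First I would record the elementary consequences of this: $A(x_0)$ and $B(x_0)$ invertible force $\det A(x_0) \neq 0_{\mathbb{F}}$ and $\det B(x_0) \neq 0_{\mathbb{F}}$, so $A(x)$ and $B(x)$ are regular and hence invertible over $\mathbb{F}(x)$; moreover $B(x)^{-1} = \mathrm{adj}(B(x))/\det B(x)$ is defined at $x_0$ with value $B(x_0)^{-1}$. Throughout write $w(x) := B(x) v(x) \in (\mathbb{F}[x])^p$ and $C(x) := A(x) P(x) B(x)$.

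For the first condition (the order of vanishing) I would prove the auxiliary fact that premultiplication by a matrix invertible at $x_0$ does not change the order of a vector polynomial at $x_0$: if $u(x) = (x-x_0)^r \tilde{u}(x)$ with $\tilde{u}(x_0) \neq 0$, then $A(x) u(x) = (x-x_0)^r A(x)\tilde{u}(x)$ and $A(x_0)\tilde{u}(x_0) \neq 0$ because $A(x_0)$ is invertible, so $\mathrm{ord}_{x_0}(A(x)u(x)) = r = \mathrm{ord}_{x_0}(u(x))$. Applying this to $u(x) = P(x) w(x)$ gives $\mathrm{ord}_{x_0}(C(x)v(x)) = \mathrm{ord}_{x_0}(A(x) P(x) w(x)) = \mathrm{ord}_{x_0}(P(x) w(x))$, so $x_0$ is a zero of order $\ell$ for $C(x)v(x)$ if and only if it is a zero of order $\ell$ for $P(x) w(x)$. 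This settles condition~1 in both directions at once, and uses only the invertibility of $A(x_0)$.

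For the second condition I would first note the bijection of kernels: since $A(x)$ is invertible over $\mathbb{F}(x)$ we have $C(x) z(x) = 0 \Leftrightarrow P(x) B(x) z(x) = 0$, and since $B(x)$ is invertible the map $z(x) \mapsto B(x) z(x)$ is a bijection from $\ker C(x)$ onto $\ker P(x)$. I would then combine this with the characterization recalled immediately before the Proposition (applied both to $P(x)$ and to $C(x)$), namely that $v(x_0) \in \ker_{x_0} M(x)$ if and only if some element of $\ker M(x)$ takes the value $v(x_0)$ at $x_0$. If $z(x) \in \ker C(x)$ has $z(x_0) = v(x_0)$, then $B(x) z(x) \in \ker P(x)$ evaluates to $B(x_0) v(x_0) = w(x_0)$, so $w(x_0) \in \ker_{x_0} P(x)$; conversely, if $y(x) \in \ker P(x)$ has $y(x_0) = w(x_0)$, then $z(x) := B(x)^{-1} y(x) \in \ker C(x)$ is defined at $x_0$ and satisfies $z(x_0) = B(x_0)^{-1} w(x_0) = v(x_0)$. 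Hence $v(x_0) \in \ker_{x_0} C(x) \Leftrightarrow w(x_0) \in \ker_{x_0} P(x)$, which is exactly the equivalence of condition~2.

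Combining the two equivalences yields the Proposition: $v(x)$ satisfies conditions~1 and~2 for $C(x)$ at $x_0$ if and only if $w(x) = B(x)v(x)$ satisfies them for $P(x)$ at $x_0$. The main obstacle I anticipate is the converse half of the condition-2 argument, where one must produce an element of $\ker C(x)$ that is both defined at $x_0$ and takes the prescribed value $v(x_0)$; this is precisely the point at which full rank of $B(x_0)$, rather than mere regularity of $B(x)$ as in \cite[Proposition 1.11]{bible}, is indispensable, since it is what guarantees that $B(x)^{-1}$ has no pole at $x_0$.
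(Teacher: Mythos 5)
Your proposal is correct and follows essentially the same structure as the paper's proof: your treatment of condition~2 --- the equivalence $v(x_0) \in \ker_{x_0} A(x)P(x)B(x) \Leftrightarrow B(x_0)v(x_0) \in \ker_{x_0} P(x)$, obtained by mapping a kernel element $w_1(x)$ of $A(x)P(x)B(x)$ to $B(x)w_1(x) \in \ker P(x)$ in one direction and using $B(x)^{-1}$, which has no pole at $x_0$ because $\det B(x_0) \neq 0_{\mathbb{F}}$, in the other --- is exactly the argument the paper gives. The only divergence is in condition~1: the paper disposes of it by remarking that the proof of \cite[Proposition 1.11]{bible} never uses the regularity of $P(x)$ and hence carries over verbatim, whereas you prove the needed fact directly (premultiplication by a matrix that is invertible at $x_0$ preserves the order of vanishing at $x_0$, since $\tilde{u}(x_0) \neq 0$ and $A(x_0)$ nonsingular give $A(x_0)\tilde{u}(x_0) \neq 0$); this makes your version self-contained, at no cost in length, and both arguments are sound.
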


\begin{proof}
Notice that if $A(x_0)$ and $B(x_0)$ are full rank then $A(x)$ and $B(x)$ are regular. 
In \cite{bible}, a root polynomial
 is defined for regular square polynomial matrices, so that condition 2. reduces to $v(x_0) \neq 0$. Nevertheless, the proof given 
in \cite[Proposition 1.11]{bible} for condition 1. does not actually use the regularity of $P(x)$, and it is therefore 
still valid when $P(x)$ is not a regular square polynomial matrix. To complete the proof: 
$v(x_0) \in \ker_{x_0} A(x)P(x)B(x) \Leftrightarrow \exists w_1(x) \in \ker A(x)P(x)B(x) : w_1(x_0)=v(x_0) \Leftrightarrow \exists w_2(x)
 \in \ker P(x) : w_2(x_0)=B(x_0)v(x_0) \Leftrightarrow B(x_0)v(x_0) \in \ker_{x_0} P(x)$. To build $w_2(x)$ from $w_1(x)$, simply put 
$w_2(x)=B(x)w_1(x)$ and use the fact that $A(x)$ is regular. To build $w_1(x)$ from $w_2(x)$, let $(B(x))^{-1}$ be the inverse matrix 
(over $\mathbb{F}(x)$) of $B(x)$, which exists because $B(x)$ is regular; then, put $w_1(x)=  (B(x))^{-1} w_2(x)$.
\end{proof}

Let $v_1(x),\dots,v_s(x)$ be root polynomials corresponding to $x_0$ of orders $\ell_1 \leq \dots \leq \ell_s$. 
We call them a \emph{maximal set of $x_0$-independent root polynomials} if:
\begin{enumerate}
 \item they are $x_0$-independent, i.e. $v_1(x_0),\dots,v_s(x_0)$ are 
linearly independent;
\item no $(s+1)$-uple of 
$x_0$-independent root polynomials corresponding to $x_0$ exists;
\item there are no root polynomials corresponding to $x_0$ of order $\ell>\ell_s$;
 \item for all $j=1,\dots,s-1$, there does not exist a root polynomial $\hat{v}_{j}(x)$ of order $\hat{\ell}_{j} > \ell_j$ such that 
$\hat{v}_j(x),v_{j+1}(x),\dots,v_s(x)$ are $x_0$-independent.
\end{enumerate}
As long as $\det B(x_0)$ and $\det A(x_0)$ are nonzero, it is easy to check that 
$v_1(x),\dots,v_s(x)$ are a maximal set of $x_0$-independent 
root polynomials for 
$A(x)P(x)B(x)$ if and only if $B(x)v_1(x),\dots,B(x)v_s(x)$ are 
a maximal set of $x_0$-independent 
root polynomials for $P(x)$.  
The next Proposition completes the proof of Lemma \ref{lancaster}.

\begin{proposition}\label{rootandsmith}
 $P(x) \in (\mathbb{F}[x])^{m \times p}$ has a maximal set of $x_0$-independent 
root polynomials, of order $\ell_1, \dots, \ell_s$, if 
and only
 if $(x-x_0)^{\ell_1},\dots,(x-x_0)^{\ell_s}$ are the elementary divisors of $P(x)$ associated with $x_0$.
\end{proposition}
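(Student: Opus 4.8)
The plan is to reduce to the case of a diagonal matrix, using the invariance statements already at our disposal, and then to read off everything from the Smith form directly.

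\textbf{Reduction to the Smith form.} By Theorem \ref{smiththeorem} there are unimodular $A(x) \in M_m(\mathbb{F}[x])$ and $B(x) \in M_p(\mathbb{F}[x])$ with $S(x)=A(x)P(x)B(x)=\mathrm{diag}(d_1(x),\dots,d_\nu(x))$. Since $A(x)$ and $B(x)$ are unimodular, their determinants are nonzero constants, so $A(x_0)$ and $B(x_0)$ are full rank for \emph{every} $x_0 \in \mathbb{F}$. Thus Proposition \ref{PandAPB}, together with the observation recorded just before the present statement (that $v(x)\mapsto B(x)v(x)$ sends a maximal set of $x_0$-independent root polynomials of $S(x)$ to one of $P(x)$, with the same orders), provides an order-preserving bijection between the maximal sets of $P(x)$ and of $S(x)$. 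On the other hand $P(x)$ and $S(x)$ have the same invariant polynomials, hence the same elementary divisors associated with $x_0$. Therefore it suffices to prove the Proposition for the diagonal matrix $S(x)$.

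\textbf{The diagonal case: producing the root polynomials.} For $S(x)$ the coordinate vectors $e_j$ indexed by the zero columns (those $j\le\nu$ with $d_j\equiv 0$, together with the columns $j>\nu$ present when $p>m$) form a constant, hence minimal, basis of $\ker S(x)$, so that $\ker_{x_0}S(x)$ is exactly their span. If instead $d_i\not\equiv 0$ then $S(x)e_i=d_i(x)f_i$ (with $f_i$ the $i$th coordinate vector of $\mathbb{F}^m$), which vanishes at $x_0$ to order $\mu_i:=\mathrm{ord}_{x_0}d_i$, while $e_i(x_0)=e_i\notin\ker_{x_0}S(x)$; hence $e_i$ is a root polynomial of order $\mu_i$ whenever $\mu_i\ge 1$. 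The relations $d_i\mid d_{i+1}$ give $\mu_1\le\mu_2\le\cdots$, and the positive $\mu_i$ are precisely the exponents of the elementary divisors of $S(x)$ at $x_0$. The family $\{e_i : \mu_i\ge 1\}$ is therefore a candidate maximal set whose orders are exactly the elementary-divisor exponents, which proves the ``if'' direction once maximality is checked.

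\textbf{Maximality and the converse.} The key estimate is that $\max_j\mu_j$ is the largest order attainable. Indeed, order $\ge 1$ forces $v(x_0)\in\ker S(x_0)$, and $v(x_0)\notin\ker_{x_0}S(x)$ forces a nonzero component $v_i(x_0)\neq 0$ at some nonzero-column index $i$; membership in $\ker S(x_0)$ then forces $d_i(x_0)=0$, i.e. $\mu_i\ge 1$, and for this $i$ the $i$th entry of $S(x)v(x)$ vanishes to order exactly $\mu_i$, so $\mathrm{ord}_{x_0}(S v)\le\mu_i\le\max_j\mu_j$, with equality realised by the corresponding $e_i$. Applying this estimate greedily ``from the top'', where at each stage one works modulo the already-chosen values and modulo $\ker_{x_0}S(x)$, shows that every maximal set of $x_0$-independent root polynomials has order multiset equal to the positive $\mu_i$; this simultaneously proves the ``only if'' direction and the fact that the multiset of orders is an invariant of $S(x)$. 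Here it is essential that $x_0$-independence be read modulo $\ker_{x_0}S(x)$ (equivalently, that the values together with a basis of $\ker_{x_0}S(x)$ be linearly independent), since this is what makes the number $s$ of the set equal to $\dim\bigl(\ker S(x_0)/\ker_{x_0}S(x)\bigr)$, i.e. the number of elementary divisors at $x_0$.

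\textbf{Main obstacle.} I expect the delicate point to be not the explicit diagonal computation but the verification that conditions 2--4 of the maximal-set definition genuinely single out the positive partial multiplicities \emph{independently of the chosen set}: one must argue that the greedy top-down extraction of orders is well defined and basis-independent, while simultaneously keeping track of the extra kernel directions coming from the singular and rectangular structure of $S(x)$ and performing all the independence bookkeeping modulo $\ker_{x_0}S(x)$.
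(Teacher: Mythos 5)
Your proposal is correct and takes essentially the same approach as the paper's own proof: reduction to the Smith form via Theorem \ref{smiththeorem} and Proposition \ref{PandAPB}, the coordinate vectors $e_i$ as root polynomials whose orders $\mu_i$ are the partial multiplicities of the invariant polynomials at $x_0$, the estimate that any root polynomial's order is bounded by $\mu_i$ at each nonzero-column index $i$ where its value has a nonzero entry, and a greedy top-down matching of the orders of a maximal set against the positive $\mu_i$ using conditions 2--4. The one place where you sharpen the paper is your insistence that $x_0$-independence be read modulo $\ker_{x_0}S(x)$: this is not a cosmetic choice but is needed for the statement to hold for singular matrices (for $S(x)=\mathrm{diag}(x,0_{\mathbb{F}[x]})$ and $x_0=0_{\mathbb{F}}$, the vectors $e_1$ and $e_1+e_2$ are root polynomials of order $1$ with linearly independent values, so the definition taken literally would produce a ``maximal set'' of cardinality two against a single elementary divisor), and it is precisely the reading that the paper's own counting step ``otherwise $\dim\ker S(x_0)-\dim\ker_{x_0}S(x)>s$'' implicitly presupposes.
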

\begin{proof}
Let $S(x)$ be the Smith form of $P(x)$, and recall that the inverse of a unimodular polynomial matrix
is still a unimodular  
polynomial matrix \cite{bible}. Thus, in view of Proposition \ref{PandAPB} and Theorem \ref{smiththeorem}, 
it suffices to prove the thesis for $S(x)$. 
If $S(x)$ is the zero matrix, 
it has neither a root polynomial nor an elementary divisor, so there is nothing to prove. Otherwise, let $\nu$ be the maximal value of 
$i$ such that $(S(x))_{ii}\neq 0_{\mathbb{F}[x]}$ and for $j=1,\dots,p$ let $e_j \in (\mathbb{F}[x])^p$ be the 
polynomial vector such 
that $(e_j)_i=\delta_{ij}$. If $\nu < p$, $[e_{\nu+1},\dots,e_p]$ is a minimal basis for $\ker S(x)$ and, 
being of order $0$, also 
for $\ker_{x_0} S(x)$. Suppose that $v_1(x),\dots,v_s(x)$ is a maximal set of $x_0$-independent root polynomials for $S(x)$.  
Let $k \leq 
\nu$ be the smallest index such that $(v_s(x_0))_k \neq 0_{\mathbb{F}}$: there must exist such an index because 
$v_s(x_0) \not\in \ker_{x_0} S(x)$.
Let $(S(x))_{kk}=(x-x_0)^{\mu} \theta(x)$, with $\theta(x_0)\neq 0_{\mathbb{F}}$. We get $(S(x)v_s(x))_k = 
(x-x_0)^{\mu} \theta(x) (v_s (x))_k$, so $\mu \geq \ell_s$. Actually, $\mu=\ell_s$, or $e_k$ would be a root 
polynomial of order greater than $\ell_s$, which is absurd.  
Then let $k'$ be the largest index not equal to $k$ and 
such that $(v_{s-1}(x_0))_{k'} \neq 0_{\mathbb{F}}$ (if such and index does not exist, then $v_{s-1}(x_0)$ 
is, up to a vector lying in $\ker_{x_0} S(x)$, a multiple of $e_k$ and thus 
$\ell_{s-1}=\ell_s$: in this case, replace without any loss of generality $v_{s-1}(x)$ by 
a suitable linear combination of $v_{s-1}(x)$ and $v_s(x)$). Following an argument similar as above, we can show that  
$(S(x))_{k'k'}=(x-x_0)^{\ell_{s-1}}\hat{\theta}(x)$, $\hat{\theta}(x_0) \neq 0_{\mathbb{F}}$. 
We repeat the process until we find all the $s$ sought elementary divisors. There cannot be more, 
otherwise $\dim \ker S(x_0) - \dim \ker_{x_0} S(x)>s$ and it would be possible 
to find an $(s+1)$-uple of $x_0$-independent root polynomials.  
Conversely, it is easy to check that $e_{\nu-s+1},\dots,e_{\nu}$ are a maximal set of $x_0$-independent root polynomials.
\end{proof}

\begin{remark}
Root polynomials carry all the information on Jordan chains \cite{bible}. Let 
$v(x)=\sum_{i=0}^{\ell-1}(x-x_0)^i v_i$, $v_i \in \mathbb{F}^m$, be a root polynomial of order $\ell$ corresponding to $x_0$ for $P(x)$. 
It is possible to prove that then $w(y)=\sum_{i=0}^{\ell-1} [d(y)]^{\ell-1-i}[n(y)-x_0 d(y)]^i v_i$ is a root polynomial of 
order $m_0 \ell$ corresponding to $y_0$ for $Q(y)$. The latter formula 
relates the Jordan chains of $Q(y)$ at $y_0$ to the Jordan chains of $P(x)$ at $x_0$.
\end{remark}

\section{Conclusions}

We have shown that if $P(x)$ and $Q(y)$ are polynomial matrices whose entries belong to the ring of univariate 
polynomials in $x$ (resp. $y$) with coefficients in any field, and if $P(x)$ and $Q(y)$ are related by a rational transformation $x(y)$, 
then the complete eigenstructures of $Q(y)$ and $P(x)$ are simply related.

\section{Acknowledgements}

I would like to thank N. Mackey for her talk \cite{iciam} on M\"{o}bius transformations that was one of the sources of inspiration for this generalisation on generic rational functions (the other one is \cite{dicksonpal}). 
I am also grateful to N. Mackey and D. S. Mackey for the subsequent discussion, and to D. A. Bini and L. Gemignani for useful comments.
Finally, I am indebted with an anonymous referee for valuable suggestions and remarks that helped improving the paper.

\end{document}